\newtheorem{thm}{Theorem}[section]
\newtheorem{lem}[thm]{Lemma}
\newtheorem{conj}[thm]{Conjecture}
\newtheorem{obs}[thm]{Observation}
\newtheorem{cor}[thm]{Corollary}
\theoremstyle{definition}
\newtheorem{rem}[thm]{Remark}
\newtheorem{ex}[thm]{Example}
\newtheorem{que}[thm]{Question}
\def\Z{\mathbb Z}
\begin{document}
\dedicatory{Dedicated to Professors  Taizo Kanenobu,   Yasutaka Nakanishi, 
and Makoto~Sakuma 
  for their 60th birthday}
\title[]
{Fibered knots with the same $0$-surgery and the slice-ribbon conjecture}
\author{Tetsuya Abe and Keiji Tagami}
\subjclass[2010]{57M25, 57R65}
\keywords{Akbulut-Kirby's conjecture, annulus twist, slice-ribbon conjecture,  tight contact structure}
\address{Osaka City University Advanced Mathematical Institute, 
3-3-138 Sugimoto, Sumiyoshi-ku,
Osaka 558-8585 Japan}
\email{tabe@sci.osaka-cu.ac.jp}
\address{Department of Mathematics, Faculty of Science and Technology, Tokyo
University of Science, 2641 Yamazaki, Noda, Chiba, 278-8510, Japan}
\email{tagam\_keiji@ma.noda.tus.ac.jp}
\maketitle
\vskip -5mm
\vskip -5mm
\begin{abstract}
Akbulut and Kirby conjectured that two knots with the same $0$-surgery are concordant. In this paper, 
we prove that if the slice-ribbon conjecture is true,
then the modified Akbulut-Kirby's conjecture is false. 
We also give a  fibered potential counterexample to the slice-ribbon conjecture.
\end{abstract}

\section{Introduction}
The slice-ribbon conjecture 
 asks whether any slice knot in  $S^3$ bounds a ribbon disk in
the standard $4$-ball
 $B^4$  (see \cite{Fox}).
There are many studies on this conjecture
(cf.~\cite{{ATange2}, {ATange1}, {CD},  {GST},  {GJ},  {HKL}, {LM}, {Le}, {Li}}).
On the other hand, until recently, 
few direct consequences of the slice-ribbon conjecture were known. 
This situation has been changed by Baker.
He gave the following conjecture.
\par
\begin{conj}[{\cite[Conjecture 1]{Baker}}] \label{conj:baker1}
Let $K_0$ and $K_1$ be  fibered knots in $S^3$ supporting the tight contact structure.
If $K_0$ and $K_1$ are concordant, then $K_0=K_1$.
\end{conj}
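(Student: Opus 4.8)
Since the paper frames Baker's conjecture as a consequence of the slice--ribbon conjecture, I would not attempt an unconditional proof---that seems out of reach---but instead try to establish the implication \emph{slice--ribbon $\Rightarrow$ Conjecture~\ref{conj:baker1}}. The plan begins with the standard contact-geometric dictionary. By Hedden's theorem, a fibered knot supports the tight contact structure $\xi_{std}$ if and only if it is strongly quasipositive, and for such a knot one has $\tau(K)=g(K)=g_4(K)$. Thus $K_0$ and $K_1$ are fibered strongly quasipositive knots. A concordance forces $\tau(K_0)=\tau(K_1)$, hence $g(K_0)=g(K_1)=:g$, and it makes $J:=K_0\#(-K_1)$ slice. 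Invoking slice--ribbon upgrades ``slice'' to ``ribbon'': $J$ bounds a ribbon disk. Finally $J$ is itself fibered, of genus $2g$, since a connected sum of fibered knots is fibered.

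The temptation is to argue directly on $J$. There is an easy one-knot principle: a fibered strongly quasipositive knot that is slice must be unknotted, since strong quasipositivity gives $g_4=g$ while sliceness gives $g_4=0$, forcing $g=0$. This does not apply to $J$. Indeed, additivity of $\tau$ under connected sum together with $\tau(-K_1)=-\tau(K_1)$ gives $\tau(J)=\tau(K_0)+\tau(-K_1)=g-g=0$, whereas $g(J)=2g$; so for $g>0$ we have $\tau(J)\neq g(J)$, and $J$ does \emph{not} support the tight contact structure. The real content is therefore a \emph{cancellation} statement: a fibered ribbon knot that splits as (fibered strongly quasipositive) $\#$ (fibered strongly quasinegative) must have its two summands mirror-inverse, i.e.\ $K_0=K_1$. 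Coarser concordance data cannot detect this, since the Fox--Milnor identity $\Delta_{K_0}\Delta_{K_1}=f(t)f(t^{-1})$, together with equal signatures, $\tau$, and $s$, is satisfied by many pairs of distinct genus-$g$ fibered knots.

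For the cancellation step I see two plausible mechanisms. The first routes through \emph{ribbon concordance}: if the ribbon disk for $K_0\#(-K_1)$ could be arranged to exhibit ribbon concordances in both directions, $K_0\leq K_1$ and $K_1\leq K_0$, then the antisymmetry of ribbon concordance (Gordon's conjecture, now a theorem of Agol) would give $K_0=K_1$ outright. The second is contact/symplectic: for strongly quasipositive fibered knots the fiber surface is a quasipositive Bennequin surface realizing $g_4$, so pushing $F_0$ into $B^4$ yields a symplectic (transverse holomorphic) slice surface and $F_1$ yields an anti-symplectic one; one would try to glue these to the concordance and extract an adjunction- or tightness-type obstruction that vanishes only when the two open books agree.

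The main obstacle, and the reason the statement remains conjectural, is that a concordance---even a ribbon disk for the difference---carries no a priori information about the open-book fibrations or their monodromies. The contact data of $K_0$ and $K_1$ are linked only through invariants that survive concordance, and these are too weak to separate fibered knots of a fixed genus. The hard part is making the ribbon structure \emph{compatible with the fibrations}, strong enough to force the monodromies of $K_0$ and $K_1$ to coincide: neither the ribbon-concordance route (one must actually produce both one-directional ribbon concordances, which a ribbon disk for the connected sum does not obviously furnish) nor the symplectic route (the natural closed surface built from $F_0$, the concordance, and $F_1$ sits in $S^4$, where adjunction gives no constraint) is known to close the gap. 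I would expect that some genuinely new input relating the smooth ribbon disk to the two Seifert fibrations is required, even granting slice--ribbon.
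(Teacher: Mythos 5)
The statement you were asked to prove is stated in the paper as a \emph{conjecture} (Baker's Conjecture~1), and the paper offers no proof of it; it remains open. So there is no in-paper argument to compare against, and you were right not to attempt an unconditional proof. Your reduction of the conditional version to the fibered ribbon knot $J=K_0\#\overline{K_1}$, and your observation that the naive $\tau$/slice-genus argument dies because $J$ itself is not tight, are both accurate.

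However, your closing claim that ``some genuinely new input\dots is required, even granting slice--ribbon'' is too pessimistic: the implication you abandon is exactly Theorem~\ref{thm:baker} of the paper (Baker's Corollary~4), and the machinery that closes the gap already exists. It is not an adjunction argument and not Agol's antisymmetry; it is the Casson--Gordon/Miyazaki theory of fibered homotopically ribbon knots, reviewed in Appendix~A. Miyazaki's Theorem~5.5 says that if a connected sum $K_1\#\cdots\#K_n$ of prime fibered knots is homotopically ribbon and each summand is \emph{minimal} with respect to homotopy ribbon concordance ``$\geq$'' (or has Alexander polynomial with no Fox--Milnor factor), then the indices pair off with $K_{i_s}=\overline{K_{j_s}}$. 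Applied to $J=K_0\#\overline{K_1}$ with $n=2$ this yields $K_0=K_1$ outright. The role of tightness is precisely to supply the minimality hypothesis: for a tight (equivalently, strongly quasipositive) fibered knot the fiber realizes the four-genus, and combining this with the genus monotonicity of ribbon concordance shows a tight fibered knot cannot be strictly homotopy-ribbon-concordant above a smaller fibered knot. In other words, the ``cancellation statement'' you correctly identified as the real content is exactly what Casson--Gordon's loop theorem for fibred ribbon knots, as packaged by Miyazaki, provides --- the ribbon disk \emph{is} made compatible with the fibration there, via the extension of the closed monodromy over a handlebody. Your two proposed mechanisms (bidirectional ribbon concordance, symplectic caps) are the ones that do not work, for the reasons you yourself give.
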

\par
Baker proved a strong and direct consequence of the slice-ribbon conjecture
as follows:
\begin{thm}[{\cite[Corollary 4]{Baker}}] \label{thm:baker}
If the slice-ribbon conjecture is true, 
then Conjecture \ref{conj:baker1} is true. 
\end{thm}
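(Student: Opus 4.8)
The plan is to convert the contact-geometric hypothesis into a statement about the Ozsv\'ath--Szab\'o concordance invariant $\tau$ and the fiber genus, and then to upgrade ``concordant'' into a rigid, handle-controlled cobordism using the slice-ribbon conjecture. First I would invoke Hedden's theorem: a fibered knot $L\subset S^3$ supports the tight contact structure if and only if $\tau(L)=g(L)$, where $g$ denotes the Seifert (equivalently fiber) genus. Applying this to $K_0$ and $K_1$ gives $\tau(K_i)=g(K_i)$ for $i=0,1$. Since $\tau$ is a concordance invariant and $K_0$ is concordant to $K_1$, we obtain $\tau(K_0)=\tau(K_1)$, hence $g(K_0)=g(K_1)=:g$. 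Thus the two knots already share genus and fiber data; the remaining task is to promote the concordance to an equality of knots.

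Next I would pass to the concordance inverse. Because $K_0$ and $K_1$ are concordant, the knot $J:=K_0\,\#\,(-K_1)$ is slice, where $-K_1$ is the reverse mirror. Here the slice-ribbon conjecture enters decisively: assuming it, $J$ bounds a \emph{ribbon} disk $D\subset B^4$, i.e.\ one on which the radial Morse function has only minima and saddles and no maxima. This ``no maxima'' structure is precisely what a generic slice disk fails to provide, and it is the reason the hypothesis is needed rather than bare concordance. Reading the movie of $D$ and unknotting the connected-sum sphere, the ribbon disk yields a ribbon concordance relating $K_0$ and $K_1$ in $S^3\times[0,1]$, built by births of trivial circles and band attachments only.

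The crux is then a rigidity statement: a ribbon concordance between fibered knots of equal genus must be a product. I would argue this by fibering the concordance exterior. The fiber surfaces of $K_0$ and $K_1$ should extend across the ribbon concordance, and the positivity forced by the tight contact structure --- via Giroux's correspondence the monodromies $\phi_0,\phi_1$ are products of right-handed Dehn twists, hence right-veering --- prevents any genuine birth/saddle pair from contributing: such a pair would either raise the genus or produce a left-veering monodromy, contradicting $g(K_0)=g(K_1)$ together with right-veeringness. Hence the births and bands cancel, the concordance is a product, and $K_0=K_1$ (equivalently, the fibered complements are homeomorphic, so the knots agree by Gordon--Luecke).

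I expect this last step to be the main obstacle. The delicate point is controlling the monodromy under the ribbon moves: one must show that the only way to pass from $(F_0,\phi_0)$ to $(F_1,\phi_1)$ through band moves while keeping $\tau=g$ is the trivial identification. The contact positivity (right-veering, or nonvanishing of the contact invariant) is the essential input that rules out nontrivial cancellations; without it, fibered knots with equal Alexander polynomial and genus need not coincide. Verifying that this positivity is preserved and genuinely forces triviality --- rather than merely constraining the Alexander polynomials via Gordon's divisibility relation $\Delta_{K_0}\mid\Delta_{K_1}$ --- is where the real work lies.
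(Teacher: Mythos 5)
The paper does not actually prove this statement---it imports it wholesale from Baker's paper---so the only internal point of comparison is Appendix~A, where Miyazaki's Theorem~5.5 (built on the Casson--Gordon fibration theorem for ribbon fibered knots) is quoted and used for the main theorem instead. Measured against that route, your proposal has a genuine gap at its pivot: the assertion that a ribbon disk for $J=K_0\,\#\,\overline{K_1}$ ``yields a ribbon concordance relating $K_0$ and $K_1$, built by births and band attachments only.'' This is not a known implication, and you offer no argument for it. The standard translation between a slice disk for $K_0\#\overline{K_1}$ and a concordance from $K_0$ to $K_1$ introduces both births and deaths, so the no-local-maxima property is destroyed. What a ribbon disk for $J$ actually gives is that the \emph{fibered} knot $K_0\#\overline{K_1}$ is homotopically ribbon (i.e.\ $K_0\#\overline{K_1}\geq U$), or, if one insists on concordances, that the single knot $K_0\#\overline{K_1}\#K_1$ ribbon-concordantly dominates both $K_0$ and $K_1$; neither statement is a ribbon concordance between $K_0$ and $K_1$, and whether concordant knots must be ribbon concordant is itself open. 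Baker's argument (and Miyazaki's, which this paper leans on) never passes through such a concordance: it applies the Casson--Gordon theorem directly to the ribbon disk exterior of $K_0\#\overline{K_1}$, which fibers over $S^1$ with handlebody fiber, and extracts the constraint that the closed monodromy of $K_0\#\overline{K_1}$ extends over a handlebody.

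The second weak point is the ``crux'' rigidity step, which you yourself flag. Even granting a ribbon concordance, the claim that equal genus plus right-veering monodromies forces the concordance to be a product is asserted, not proved; the statement that ``the fiber surfaces should extend across the ribbon concordance'' is precisely the content of the Casson--Gordon loop theorem and is the hardest input, not a formality that follows from genus counting. Your opening step is sound and genuinely part of this circle of ideas: Hedden's characterization of tightness by $\tau(K)=g(K)$ together with concordance invariance of $\tau$ does give $g(K_0)=g(K_1)$, but matching numerical invariants is far from identifying the knots. To repair the argument you should follow Baker's actual scheme: fiber the ribbon disk complement of $K_0\#\overline{K_1}$ via Casson--Gordon, and then use the tightness of both open books (via Hedden's equality, or the uniqueness of the tight contact structure on $S^3$ under the Giroux correspondence) to force the two open books to agree, whence $K_0=K_1$ because a fibered knot is determined by its fiber surface and monodromy.
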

Originally, Conjecture~\ref{conj:baker1} was motivated by Rudolph's old question \cite{Rudolph} which asks whether
the set of algebraic knots is linearly independent 
in the knot concordance group. 
Here we observe the following, which was implicit in \cite{Baker}. 
\par
\begin{obs}\label{obs:baker1}
If Conjecture~\ref{conj:baker1} is true, the set of prime fibered knots in $S^3$ supporting the tight contact structure is linearly independent in the knot concordance group (see Lemma~\ref{lem:baker1}). 
Moreover, the set of such knots contains algebraic knots (see Lemma~\ref{lem:baker2}).
In this sense, 
Conjecture~\ref{conj:baker1} is a generalization of Rudolph's question. 
Therefore Theorem~\ref{thm:baker} implies that if the slice-ribbon conjecture is true, then the set of algebraic knots is linearly independent in the knot concordance group --an affirmative answer of Rudolph's question--.
\end{obs}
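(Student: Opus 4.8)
The statement bundles three claims, and the plan is to establish each and then chain them together. Write $\mathcal{C}$ for the knot concordance group and let $S$ denote the set of prime fibered knots in $S^3$ supporting the tight contact structure. The three claims are: (i) Conjecture~\ref{conj:baker1} implies that $S$ is linearly independent in $\mathcal{C}$ (this is the content of Lemma~\ref{lem:baker1}); (ii) $S$ contains every algebraic knot (Lemma~\ref{lem:baker2}); and (iii) combining (i)--(ii) with Theorem~\ref{thm:baker} produces the affirmative answer to Rudolph's question.

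The crux of (i) is that the fibered knots supporting the tight contact structure form a monoid under connected sum: if $K_0,K_1$ are fibered and support the tight contact structure, then $K_0\#K_1$ is fibered, and its open book supports the connected sum of the two contact structures, which on $S^3=S^3\#S^3$ is again the unique tight one. (Equivalently, through Hedden's criterion $\tau=g$ for fibered knots, this is just additivity of $\tau$ and $g$ under connected sum.) Granting this closure, suppose $\sum_{i=1}^{n}a_iK_i=0$ in $\mathcal{C}$ with the $K_i\in S$ distinct and every $a_i\neq 0$. Writing $nK$ for the connected sum of $n$ copies of $K$ and separating signs, I would move the negative terms across the equality to obtain
\[
L:=\#_{a_i>0}\,a_iK_i\ =\ \#_{a_i<0}\,|a_i|K_i=:M
\]
in $\mathcal{C}$. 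Crucially, no mirrors or reverses appear: negating a term $a_iK_i$ with $a_i<0$ contributes $|a_i|$ copies of $K_i$ \emph{itself}, so both $L$ and $M$ are honest connected sums of the original knots $K_i$ and hence lie in the monoid, i.e.\ are fibered knots supporting the tight contact structure. Since $L$ and $M$ are concordant, Conjecture~\ref{conj:baker1} forces $L=M$ as knots. By Schubert's unique prime decomposition, $L$ and $M$ share the same multiset of prime summands; but the primes of $L$ are exactly the $K_i$ with $a_i>0$ while those of $M$ are the $K_i$ with $a_i<0$, and as the $K_i$ are distinct these are supported on disjoint sets of prime knots. Equality forces both to be empty, contradicting $a_i\neq 0$, so $\sum a_iK_i\neq 0$, which is (i).

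For (ii) I would check the three defining properties for an algebraic knot $K$, the link of an irreducible plane curve singularity: it is fibered by the Milnor fibration; its compatible Milnor open book supports the Stein-fillable, hence tight, contact structure on $S^3$; and it is prime, being an iterated torus knot obtained by successive cabling from the unknot, with cables of nontrivial knots being prime. Thus algebraic knots lie in $S$. Assembling everything, Theorem~\ref{thm:baker} says the slice-ribbon conjecture implies Conjecture~\ref{conj:baker1}, which by (i) makes $S$---and \emph{a fortiori} its subset of algebraic knots---linearly independent in $\mathcal{C}$, exactly the affirmative answer to Rudolph's question. I expect the main obstacle to be the two contact-geometric inputs, namely the connected-sum closure used in (i) and the tightness of the Milnor open book in (ii); once these are in hand, the rest is Schubert's theorem and elementary arithmetic in $\mathcal{C}$.
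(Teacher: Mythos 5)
Your proposal is correct and follows essentially the same route as the paper: the linear-independence part is proved by moving the negative coefficients across the equation, invoking Conjecture~\ref{conj:baker1} to upgrade concordance to equality of knots, and finishing with unique prime decomposition (this is the paper's Lemma~\ref{lem:baker1}), while the fact that algebraic knots are prime tight fibered is the paper's Lemma~\ref{lem:baker2}. The only differences are cosmetic: you merge the paper's three sign cases into one, you explicitly justify closure of tight fibered knots under connected sum (which the paper asserts without comment), and you derive tightness of algebraic knots from the Milnor open book rather than from strong quasipositivity via Hedden's criterion.
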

%
%
%
%
Theorem \ref{thm:baker} and Observation \ref{obs:baker1}
make the slice-ribbon conjecture more important and fascinating.
\par
In this paper, we give another consequence of the slice-ribbon conjecture.
To state our main result,
we recall Akbulut and Kirby's conjecture on knot concordance
in the Kirby's problem list \cite{Kirby}.
\par
\begin{conj}
[{\cite[Problem 1.19]{Kirby}}]\label{conj:AK}
If $0$-surgeries on two knots give the same 3-manifold, then the knots 
are concordant. 
\end{conj}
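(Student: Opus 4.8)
This statement is a long-standing open conjecture, not a theorem awaiting proof; indeed the thrust of the present paper, as announced in the abstract, is that a (modified) form of it is \emph{false} under the slice-ribbon conjecture. So rather than attempting a direct proof, which seems hopeless, I would aim to \emph{refute} it by producing two knots $K_0$ and $K_1$ whose $0$-surgeries give the same $3$-manifold but which are \emph{not} smoothly concordant.

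The plan is to obtain the non-concordance conditionally, via the machinery already assembled. By Theorem~\ref{thm:baker}, the truth of the slice-ribbon conjecture forces Conjecture~\ref{conj:baker1}: two fibered knots supporting the tight contact structure that are concordant must be equal. So it suffices to exhibit \emph{distinct} fibered knots $K_0 \neq K_1$, each supporting the tight contact structure, whose $0$-surgeries agree. If such a pair exists, then under the slice-ribbon conjecture they cannot be concordant (otherwise $K_0 = K_1$), yet they share the same $0$-surgery, contradicting Conjecture~\ref{conj:AK}.

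The work therefore splits into two independent tasks. First, \emph{engineer equal $0$-surgeries}: I would look for a local modification of a fibered knot that is invisible to the $0$-surgered $3$-manifold but genuinely changes the knot type. A natural candidate is a twisting operation supported on an annulus (an annulus twist), whose effect on the surgered manifold can be undone by handle slides and Kirby moves precisely because the surgery coefficient is $0$; this should produce a whole family $\{K_n\}$ sharing a single $0$-surgery. Second, \emph{control the knots}: I would verify that the $K_n$ remain fibered and continue to support the tight contact structure, so that Conjecture~\ref{conj:baker1} applies, and that they are pairwise non-isotopic, for instance by computing the Alexander polynomial, the Seifert genus, or the monodromy.

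The hard part will be the first task: carrying out the explicit Kirby calculus that shows the annulus twist really preserves the $0$-surgery, while simultaneously arranging that the twisted knots stay in the fibered-and-tight class and yet become distinct. Establishing distinctness is usually routine once a discriminating invariant is chosen, and the fibered/tight property is often preserved under such twists; the genuine difficulty is proving the surgery equivalence rigorously and ruling out that the candidate pair is concordant by some unrelated mechanism. A final subtlety is the word ``modified'' in the abstract: I expect one must pin down the precise orientation conventions on the $0$-surgery (demanding an orientation-preserving homeomorphism, and perhaps fixing a homology orientation), since the counterexample will only contradict this sharpened form of Conjecture~\ref{conj:AK}.
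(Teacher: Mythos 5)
The statement you were handed is Conjecture~\ref{conj:AK} itself, and the paper does not treat it conditionally at all: it is already known to be false unconditionally. Immediately after stating it, the paper invokes Livingston's example of a knot $K$ that is not concordant to its reverse $K^{r}$; since $K$ and $K^{r}$ have the same $0$-surgery, Conjecture~\ref{conj:AK} fails outright, with no appeal to the slice-ribbon conjecture. The conditional statement (Theorem~\ref{thm:main}) concerns only the orientation-corrected Conjecture~\ref{conj:AK2}. Your proposal skips this distinction and aims directly at a conditional refutation; that is the right spirit for the modified conjecture, but it misses the one-line unconditional answer for the statement as posed.

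More importantly, your conditional strategy has a genuine gap. You propose to invoke Baker's theorem (Theorem~\ref{thm:baker}), which under the slice-ribbon conjecture forces concordant \emph{tight} fibered knots to coincide; for this you need the knots produced by annulus twists to support the tight contact structure. They need not, and in the paper's example they do not. The knot $6_{3}$ is amphichiral with vanishing Ozsv\'ath--Szab\'o concordance invariant but genus $2$, hence it is not strongly quasipositive and therefore not tight by Lemma~\ref{lem:baker2}(1). More decisively, Remark~\ref{rem:plane-field} records $d_{3}(\xi_{n})=-n^{2}-n+\frac{3}{2}$ for the contact structure supported by the open book of $K_{n}=A^{n}(6_{3})$, which differs from the value $-\frac{1}{2}$ of the unique tight contact structure on $S^{3}$ for all $n\notin\{1,-2\}$; so these open books support overtwisted structures and Conjecture~\ref{conj:baker1} simply does not apply to them. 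The paper circumvents this by replacing Baker's theorem with Miyazaki's theorem on ribbon fibered knots (Corollary~\ref{cor:Miyazaki}), which requires only that $K_{0}$ and $K_{1}$ be distinct fibered knots with irreducible Alexander polynomial --- hypotheses verifiable here via Gabai's theorem and the Jones polynomial. To salvage your route you would have to produce a tight fibered knot whose annulus twists remain tight yet become distinct, and the $d_{3}$ computation indicates that annulus twists generically destroy tightness.
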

Livingston \cite{Livingston} demonstrated a knot $K$ such that it is not concordant to 
$K^{r}$, where $K^{r}$ is the knot obtained from $K$ by reversing the orientation.
Therefore Conjecture~\ref{conj:AK} is false since 
$0$-surgeries on $K$ and $K^{r}$ give the same 3-manifold,
however, the following conjecture seems to be still open.
\begin{conj}
\label{conj:AK2}
If $0$-surgeries on two knots give the same 3-manifold, 
then the knots with relevant orientations are concordant.
\end{conj}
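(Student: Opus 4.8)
The plan is to attempt to upgrade the homeomorphism of $0$-surgeries into a smooth concordance by a four-dimensional cobordism argument, and then to isolate the smooth-topological input on which the whole scheme rests. Fix an orientation-preserving homeomorphism $\phi\colon S^3_0(K_0)\to S^3_0(K_1)$ carrying the canonical generator of $H_1(S^3_0(K_0);\mathbb{Z})$, represented by a meridian of $K_0$, to that of $H_1(S^3_0(K_1);\mathbb{Z})$; matching these generators is the meaning of the ``relevant orientations'' in the statement. Let $X_i$ be the trace of the $0$-surgery, obtained by attaching a $0$-framed $2$-handle to $B^4$ along $K_i$, so that $\partial X_i=S^3_0(K_i)$ and $H_2(X_i;\mathbb{Z})\cong\mathbb{Z}$ has square $0$. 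I would then form the closed oriented $4$-manifold $W=X_0\cup_\phi(-X_1)$. A Mayer--Vietoris computation gives $\pi_1(W)=1$ and $H_2(W;\mathbb{Z})\cong\mathbb{Z}^2$, and Novikov additivity gives $\sigma(W)=0$; since the $0$-framings are even, $W$ is spin, so its intersection form is even and $W$ is a homotopy $S^2\times S^2$.

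Next I would try to read a concordance off a splitting of $W$. The belt sphere of the $2$-handle of $X_i$ is the surgery dual knot $\mu_i\subset S^3_0(K_i)$, and the cocore is a disk $D_i$ with $\partial D_i=\mu_i$, while the core caps off with a Seifert surface pushed into $B^4$ to a closed surface carrying the generator of $H_2(X_i)$. The essential point is that $K_i$ is recovered from the pair $(S^3_0(K_i),\mu_i)$ by surgery on $\mu_i$; consequently a smooth concordance from $K_0$ to $K_1$ would follow from a diffeomorphism $W\cong S^2\times S^2$ compatible with the two handle decompositions, or equivalently from a smoothly embedded square-$0$ sphere in $W$ meeting the cocores $D_0,D_1$ coherently. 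Deleting a tubular neighborhood of such a sphere would leave $S^3\times[0,1]$ in which the traces of the two cores sweep out an annulus realizing the concordance. Thus the conjecture would reduce to the claim that every $W$ produced this way is smoothly standard, with the relevant homology class carried by a smoothly unknotted sphere dual to the handle cores.

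The hard part is precisely this last reduction, and it is where I expect the argument to stall. By Freedman's classification $W$ is \emph{homeomorphic} to $S^2\times S^2$, which yields at best a topological, locally flat conclusion; but the oriented homeomorphism type of the $3$-manifold $S^3_0(K_0)$ exerts no control over the smooth structure of $W$, nor over whether the required square-$0$ class is carried by a smoothly unknotted sphere in the correct position. Controlling this smooth four-dimensional data from the boundary identification alone is the crux of the matter: if $W$ were an exotic homotopy $S^2\times S^2$, or if the candidate dual spheres were smoothly knotted, the concordance annulus could not be extracted. In short, the natural strategy converts Conjecture~\ref{conj:AK2} into a smoothing problem in dimension four, and it is exactly the four-dimensional flexibility obstructing that smoothing --- the same flexibility the slice-ribbon conjecture is brought in to detect --- that one must overcome.
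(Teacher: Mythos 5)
You have not proved Conjecture~\ref{conj:AK2}, and no argument along your lines (or any other) could have succeeded: the statement you were handed is a \emph{conjecture}, the paper contains no proof of it, and in fact the entire point of the paper is to argue it is false. Theorem~\ref{thm:main} shows that the slice-ribbon conjecture implies Conjecture~\ref{conj:AK2} fails: the authors take $K_0=6_3$ with an annulus presentation and $K_1=A(K_0)$ its annulus twist, so that the $0$-surgeries agree by Osoinach's theorem \cite{Osoinach} (Lemma~\ref{lem:0-surgery}); both knots are fibered (by Gabai \cite{Gabai}), distinct (e.g.\ by their Jones polynomials), and share the irreducible Alexander polynomial $1-3t+5t^2-3t^3+t^4$, so Miyazaki's theorem \cite{Miyazaki} (Corollary~\ref{cor:Miyazaki}) shows $K_0\#\overline{K_1}$ is not ribbon; slice-ribbon would then make it non-slice, i.e.\ $K_0$ and $K_1$ non-concordant. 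Moreover, the footnote to Theorem~\ref{thm:main} records that Yasui \cite{Yasui} has produced unconditional counterexamples to Conjecture~\ref{conj:AK2}, so a correct proof cannot exist. Your write-up, to its credit, never actually claims to close the argument --- it is a reduction followed by an honest admission that the reduction stalls --- but as a proof of the statement it has an unfillable gap at exactly the step you flag.

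That said, your analysis of \emph{where} the strategy dies is accurate and worth noting. The computation that $W=X_0\cup_\phi(-X_1)$ is a closed, simply connected, spin $4$-manifold with $b_2=2$ and $\sigma=0$, hence a homotopy $S^2\times S^2$, is correct, as is the observation that a suitably positioned smoothly unknotted square-zero sphere would let you excise its neighborhood and read off a concordance annulus between the surgery-dual knots. But the final smoothing step is not merely ``hard'': Freedman's classification controls only the topological, locally flat category, and the smooth standardness of $W$ together with the smooth unknottedness of the dual sphere can genuinely fail --- this four-dimensional exotica (corks) is precisely the mechanism behind Yasui's counterexamples, and it is the same flexibility the paper detects more elementarily through the ribbon obstruction. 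So your reduction correctly converts the conjecture into the smoothing problem on which it founders; the paper, rather than attempting that smoothing, sidesteps the $4$-manifold topology entirely and refutes the conjecture conditionally via fibered-knot concordance obstructions.
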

Note that Cochran, Franklin, Hedden and Horn \cite{CFHH} obtained a closely related result
to Conjecture \ref{conj:AK2}.
Indeed they gave a negative answer to the following question:
``If $0$-surgeries on two knots are integral homology cobordant,
preserving the homology class of the positive meridians, are the knots concordant?"
\par
Our main result is the following.
\begin{thm}\label{thm:main}
If the slice-ribbon conjecture is true,
then Conjecture~\ref{conj:AK2} is false\footnote{
Recently, Kouichi Yasui \cite{Yasui} proved that there are infinitely many
counterexamples of Conjecture \ref{conj:AK2}.}.
\end{thm}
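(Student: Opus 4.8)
The plan is to produce, with no extra hypotheses, an explicit pair of fibered knots $K_0$ and $K_1$ in $S^3$ enjoying three properties: (i) their $0$-surgeries are the same $3$-manifold, via a diffeomorphism carrying meridian to meridian so that the orientation data relevant to Conjecture~\ref{conj:AK2} is matched; (ii) both $K_0$ and $K_1$ support the tight contact structure on $S^3$; and (iii) $K_0$ and $K_1$ are distinct oriented knots, $K_0\neq K_1$. Granting such a pair, the theorem follows at once. If the slice-ribbon conjecture is true, then by Theorem~\ref{thm:baker} Conjecture~\ref{conj:baker1} holds, so any two concordant tight fibered knots coincide. Were $K_0$ and $K_1$ (with the relevant orientations) concordant, we would conclude $K_0=K_1$, contradicting~(iii). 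Hence $K_0$ and $K_1$ are non-concordant knots with the same $0$-surgery, and Conjecture~\ref{conj:AK2} is false.

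For the construction I would use an \emph{annulus twist}. Starting from a knot presented on an annulus together with a band, the $n$-fold annulus twist produces a family $\{K_n\}$ of knots, and the central fact --- essentially due to Osoinach and developed by Abe--Jong--Omae--Takeuchi --- is that the twist is realized by a self-diffeomorphism of the surgered manifold, so that $S^3_0(K_m)\cong S^3_0(K_n)$ for all $m,n$, with control on the meridians. This supplies~(i). I would then arrange the initial annulus presentation so that the resulting $K_n$ are fibered with explicitly understood monodromy --- for instance exhibited as a product of positive Dehn twists, equivalently realizing the $K_n$ as strongly quasipositive fibered knots. This yields~(ii): by the Honda--Kazez--Mati\'c criterion a fibered knot supports the tight contact structure precisely when its monodromy is right-veering, equivalently (Hedden) when $\tau(K_n)=g(K_n)$, and this holds for strongly quasipositive fibered knots.

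The delicate step, and the one I expect to be the main obstacle, is~(iii): separating the members of the family. Every invariant determined by the $0$-surgery manifold is forced to agree across the whole family --- in particular the Alexander polynomial, and, by Gabai's theorem identifying the Seifert genus with the Thurston norm of $S^3_0(K)$, the genus, and hence $\tau$ as well since $\tau=g$ here. Thus the knots cannot be told apart by any of the usual genus- or Alexander-type invariants, and one must extract a finer invariant that is \emph{not} captured by the $0$-surgery. I would attempt this by a direct computation of a quantum invariant such as the Jones or HOMFLY polynomial, or of the full knot Floer homology whose total rank need not be a $0$-surgery invariant, for two explicit members $K_0$ and $K_1$, or by identifying $K_0$ and $K_1$ with concretely distinct knots. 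The real content is to guarantee that the family does \emph{not} collapse to a single knot type despite all the shared surgery invariants; once a single such distinguishing computation succeeds for one pair $K_0\neq K_1$ that are simultaneously fibered and tight, the argument above completes the proof.
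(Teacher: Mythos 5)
Your logical skeleton is the same as the paper's (produce two distinct knots with the same $0$-surgery that cannot be concordant if the slice-ribbon conjecture holds), and you correctly identify annulus twists as the source of same-$0$-surgery pairs. But the route you choose for the non-concordance step --- Theorem~\ref{thm:baker} applied to Conjecture~\ref{conj:baker1} --- requires both knots to be \emph{tight} fibered, and you never produce such a pair; that existence claim is the entire content of your argument and it is left open. The paper's own example is unusable for your purpose: $K_0=6_3$ is amphichiral, so $\tau(6_3)=0<2=g(6_3)$, hence $6_3$ is not strongly quasipositive and, by Lemma~\ref{lem:baker2}(1), does not support the tight contact structure. Moreover, your hope of ``arranging the monodromy to be a product of positive Dehn twists'' runs against the structure of the construction: by Lemma~\ref{lem:annulus_fiber} an $n$-fold annulus twist inserts $t_{c'_1}^{-n}\circ t_{c'_2}^{n}$ into the monodromy, a negative as well as a positive Dehn twist, so positivity (and right-veering) is not preserved along the family. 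Concretely, Remark~\ref{rem:plane-field} gives $d_3(\xi_n)=-n^2-n+\tfrac32$, while the tight contact structure on $S^3$ has $d_3=-\tfrac12$; so in the $6_3$ family only $n=1,-2$ could even be tight, and $K_1=K_{-2}$, i.e.\ the family contains at most one tight fibered knot. Whether two \emph{distinct} tight fibered knots with the same $0$-surgery exist at all is a nontrivial question your proposal does not settle.

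The paper avoids tightness entirely. It takes $K_0=6_3$ and $K_1=A(K_0)$, gets the common $0$-surgery from Osoinach (Lemma~\ref{lem:0-surgery}), gets fiberedness of $K_1$ from Gabai's theorem, distinguishes $K_0$ from $K_1$ by Jones polynomials (your step~(iii), which is fine), and then invokes Miyazaki's theorem (Corollary~\ref{cor:Miyazaki}): for fibered knots with \emph{irreducible Alexander polynomial}, if $K_0\#\overline{K_1}$ is ribbon then $K_0=K_1$. Since $\Delta_{6_3}(t)=1-3t+5t^2-3t^3+t^4$ is irreducible and $K_0\neq K_1$, the connected sum $K_0\#\overline{K_1}$ is not ribbon, hence not slice under the slice-ribbon conjecture, hence the knots are not concordant. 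Irreducibility of the Alexander polynomial is a far cheaper hypothesis to verify than tightness, and it is automatically shared by both knots since the Alexander module is a $0$-surgery invariant. If you want to salvage your approach, you would need to either exhibit a genuinely tight pair (not available from this family) or switch your non-concordance criterion from Baker's statement to Miyazaki's.
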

In Section \ref{sec:main},
we will prove Theorem \ref{thm:main}.
We  outline the proof as follows: Let $K_0$ and $K_1$ be the 
unoriented knots depicted in Figure~\ref{fig;knots.eps}
and give arbitrary orientations on $K_0$ and $K_1$.

\begin{figure}[!htb]
\centering
\begin{overpic}[width=.8\textwidth]{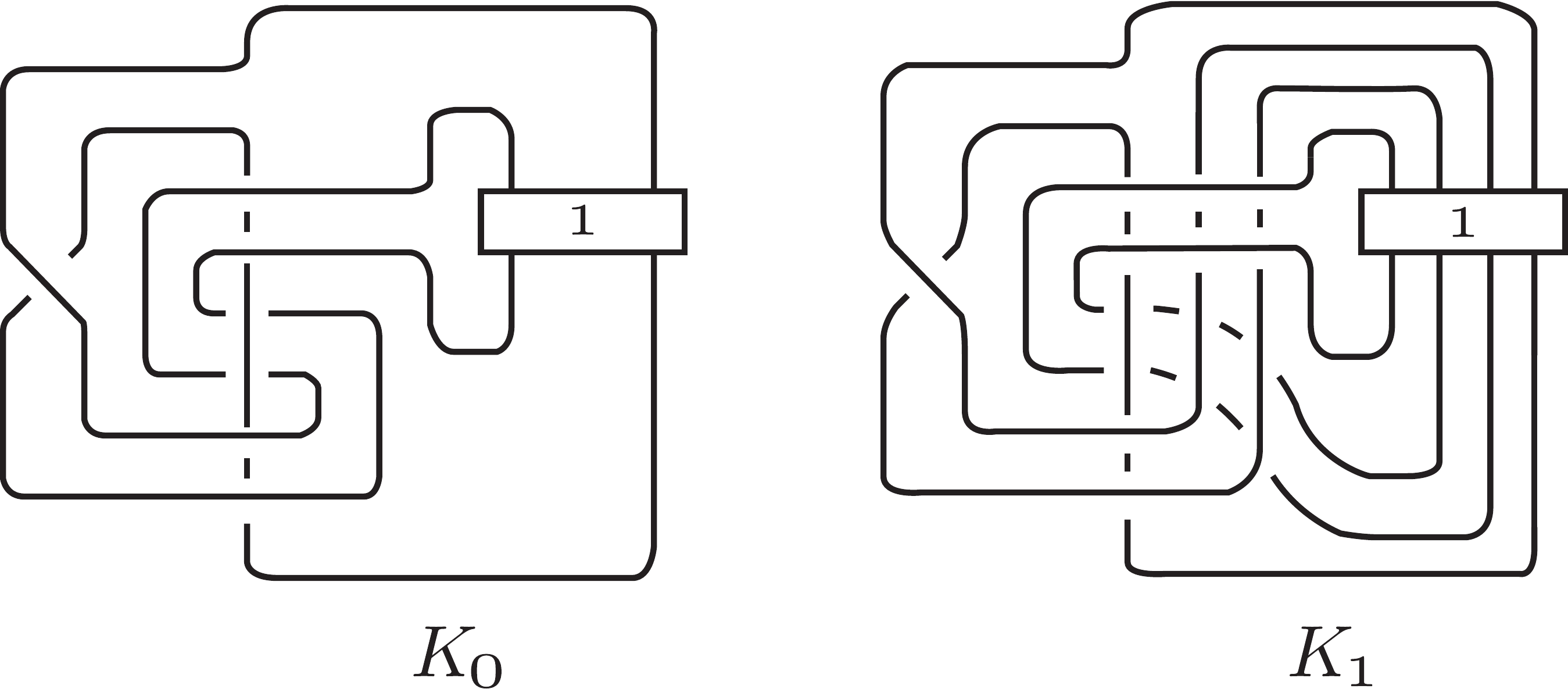}
\end{overpic}
\caption{The definitions of $K_0$ and $K_1$.
Each rectangle labeled~$1$ implies a full twist.}
\label{fig;knots.eps}
\end{figure}

By using annular twisting techniques developed in \cite{AJOT, AJLO, Osoinach, Teragaito}, 
we see that 
$0$-surgeries on $K_0$ and $K_1$ give the same 3-manifold.
On the other hand,
by  Miyazaki's result  \cite{Miyazaki}, 
we can prove that $K_{0}\#\overline{K_1}$ is not ribbon,
where  $K_{0}\#\overline{K_1}$ denotes the connected sum of $K_0$ and the mirror image of $K_1$.
Suppose that the slice-ribbon conjecture is true.
Then  
$K_{0}\#\overline{K_1}$ is not slice.
Equivalently,  $K_0$ and $K_1$ are not concordant.
As a summary,
$0$-surgeries on $K_0$ and $K_1$ give the same 3-manifold,
however,
they are not concordant if the slice-ribbon conjecture is true,
implying that  Conjecture~\ref{conj:AK2} is false.
\par
Here we consider the following question.
\begin{que}
Are the knots $K_0$ and $K_1$ in Figure~\ref{fig;knots.eps} concordant?
\end{que}

\noindent
This question is interesting since the proof of Theorem \ref{thm:main} tells us 
the following:
\begin{enumerate}
\item If $K_0$ and $K_1$ are concordant, then $K_0 \# \overline{K}_1$ is a counterexample 
to the slice-ribbon conjecture since $K_0 \# \overline{K}_1$ is not ribbon. 
\item If $K_0$ and $K_1$ are not concordant, then
Conjecture \ref{conj:AK2} is false since 
$0$-surgeries on $K_0$ and $K_1$ give the same 3-manifold. 
\end{enumerate}
\par
%
This paper is organized as follows: 
In Section~\ref{sec:main}, 
we prove Theorem~\ref{thm:main}. 
In section~\ref{sec:Baker}, 
we consider consequences of Baker's result in \cite{Baker}.
In Appendix A, 
we give a short review for Miyazaki's 
in-depth study 
on ribbon fibered knots which is based on the theorem of Casson and Gordon \cite{CG}.
In Appendix B, 
we recall twistings, annulus twists and annulus presentations. 
Moreover, we define annulus presentations compatible with fiber surfaces and 
study a relation between annulus twists and fiberness of knots. 
Finally, 
we describe monodromies of the fibered knots obtained 
from $6_3$ (with an annulus presentation)
by annulus twists.
Using these  monodromies, 
we can distinguish these knots. 
\par
\ 
\par
\noindent
\text{\textbf{Notations}.}
Throughout this paper, 
we will work in the smooth category.
Unless otherwise stated, we suppose that all knots are oriented.
Let $K$ be a knot in  $S^3$.
We denote $M_{K}(0)$ the $3$-manifold obtained from $S^{3}$ by $0$-surgery on $K$ in $S^3$,
and by $[K]$
the concordance class of $K$.
For an oriented compact surface $F$ 
with a single boundary component and a diffeomorphism $f\colon F\rightarrow F$ fixing the boundary,
we denote by $\widehat{F}$ the closed surface $F\cup D^{2}$ and by $\widehat{f}$ 
the extension $f\cup \operatorname{id}\colon \widehat{F}\rightarrow \widehat{F}$. 
We denote by $t_{C}$ the right-handed Dehn twist along a simple closed curve $C$ on $F$. 
%
\section{Proof of Theorem~\ref{thm:main}}\label{sec:main}
In this section, we prove our main theorem. 
The main tools are Miyazaki's result \cite[Theorem~5.5]{Miyazaki} and annular twisting techniques developed in 
\cite{AJOT, AJLO, Osoinach, Teragaito}. 
For the sake of completeness,
we will review these results in Appendices A and B. 
%
\begin{proof}[Proof of Theorem~\ref{thm:main}]
Let $K_0$ and $K_1$ be the unoriented knots as in Figure~\ref{fig;knots.eps}
and  give arbitrary orientations on $K_0$ and $K_1$.
By \cite[Theorem 2.3]{Osoinach} (see Lemma~\ref{lem:0-surgery}), 
$0$-surgeries on $K_0$ and $K_1$ give the same 3-manifold 
(for detail, see Appendix B.
 In this case, $K_{0}$ admits an annulus presentation as in Figure~\ref{figure:annulus-pre} and $K_{1}=A(K_{0})$).
\par
On the other hand, 
$K_0 \# \overline{K}_1$ 
is not a ribbon knot as follows:
First, note that $K_0$ is the fibered knot $6_3$ in Rolfsen's knot table,
see KnotInfo \cite{CL}.
By Gabai's theorem in \cite{Gabai}, 
$K_1$ is also fibered since $0$-surgeries on $K_1$ and $K_2$ give the same 3-manifold (see also Remark~\ref{rem:gabai}). 
Therefore $K_0 \# \overline{K}_1$ is a fibered knot. 
Here we can see that $K_0$ and $K_1$ are different knots 
(for example, by calculating the Jones polynomials of $K_0$ and $K_1$).
Also, we see that $K_{0}$ and $K_{1}$ have the same irreducible Alexander polynomial 
\[ \Delta _{K_{0}}(t)=\Delta _{K_{1}}(t)=	1-3t+ 5t^{2}-3t^{3}+ t^{4}.\] 
By Miyazaki's result \cite[Theorem~5.5]{Miyazaki} (or Corollary~\ref{cor:Miyazaki}), the knot 
$K_0 \# \overline{K}_1$ is not ribbon. 

Suppose that the slice-ribbon conjecture is true.
Then  
$K_{0}\#\overline{K_1}$ is not slice.
Equivalently,  $K_0$ and $K_1$ are not concordant.
Therefore,  if the slice-ribbon conjecture is true,
then Conjecture~\ref{conj:AK2} is false.
\end{proof}
\section{Observations on Baker's result} \label{sec:Baker}
In this section, we consider consequences of Baker's result in \cite{Baker}. 
\par
First, we recall some definitions. 
A fibered knot in $S^3$ is called \textit{tight} if it supports the tight contact structure (see \cite{Baker}). 
A set of knots is \textit{linearly independent} in the knot concordance group if it is linearly independent in the knot concordance group
as a $\Z$-module. 
We observe the following. 
\par
\begin{lem} \label{lem:baker1}
If Conjecture \ref{conj:baker1} is true,
then the set of prime tight fibered knots in $S^3$
is linearly independent in the knot concordance group.
\end{lem}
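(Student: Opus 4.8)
The plan is to deduce linear independence from Conjecture~\ref{conj:baker1} together with two structural facts: that a connected sum of tight fibered knots is again a tight fibered knot, and that knots admit a unique prime factorization with respect to connected sum (Schubert). First I would record that a prime knot is nontrivial, so none of the knots under consideration is the unknot. This is essential, since the unknot is itself a tight fibered knot but is trivial in the concordance group; without primality the statement would fail outright.

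Suppose, for contradiction, that there is a nontrivial relation $\sum_{i=1}^{n}a_i[K_i]=0$ in the knot concordance group, where $K_1,\dots,K_n$ are distinct prime tight fibered knots and $a_1,\dots,a_n\in\Z$ are not all zero. Rather than work with concordance inverses (which need not be tight fibered, since reversing the mirror sends a nontrivial strongly quasipositive knot to one that is not), I would separate the positive and negative coefficients. Let $P=\{i:a_i>0\}$ and $N=\{i:a_i<0\}$, and set
\[ K_P=\#_{i\in P}\,a_iK_i, \qquad K_N=\#_{i\in N}\,(-a_i)K_i, \]
where $a_iK_i$ denotes the connected sum of $|a_i|$ copies of $K_i$. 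Moving the negative terms across the relation gives $[K_P]=[K_N]$, that is, $K_P$ and $K_N$ are concordant.

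The first key step is to verify that both $K_P$ and $K_N$ are tight fibered knots. A connected sum of fibered knots is fibered, and the contact structure supported by the open book of a connected sum is the contact connected sum of the factors' supported contact structures; since each factor supports the unique tight contact structure on $S^3$, and the connected sum of tight contact structures on $S^3$ is again tight, both $K_P$ and $K_N$ support the tight contact structure. (Equivalently, one may use the characterization that a fibered knot is tight precisely when it is strongly quasipositive, a property obviously preserved under connected sum.) Granting this, Conjecture~\ref{conj:baker1} applies to the concordant tight fibered knots $K_P$ and $K_N$ and yields $K_P=K_N$ as knots.

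Finally I would invoke the uniqueness of the prime decomposition of knots. The prime summands of $K_P$ are exactly the $K_i$ with $i\in P$, each occurring with multiplicity $a_i$, while those of $K_N$ are the $K_i$ with $i\in N$, each occurring with multiplicity $-a_i$. Since the $K_i$ are distinct primes and $P\cap N=\varnothing$, the equality $K_P=K_N$ forces both multisets of prime summands to be empty, i.e.\ $P=N=\varnothing$, and hence $a_i=0$ for all $i$, a contradiction. I expect the main obstacle to be the first key step---confirming that tightness (equivalently, strong quasipositivity) is preserved under connected sum---since the remainder is a formal consequence of Conjecture~\ref{conj:baker1} and Schubert's unique factorization theorem.
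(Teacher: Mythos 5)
Your proposal is correct and follows essentially the same route as the paper: move the negative coefficients to the other side of the relation, observe that both resulting connected sums are tight fibered knots, apply Conjecture~\ref{conj:baker1} to conclude they are equal, and finish with the uniqueness of prime decomposition. The only cosmetic difference is that you treat all sign patterns uniformly, whereas the paper splits into three cases (all $a_i\ge 0$, all $a_i\le 0$, and mixed), which amounts to the same argument.
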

\begin{proof}
Let $K_1, K_2, \ldots, K_n$ be distinct prime tight fibered knots. 
Suppose that for some integers $a_{1}, \ldots, a_{n}$ we have 
\[ a_1 [K_1] + \cdots + a_n [K_n]=0.\]
We will prove that 
if Conjecture \ref{conj:baker1} is true, then
$a_1=a_2= \cdots= a_n=0$.
When 
$a_1 \ge 0$, $\ldots $, $a_n \ge 0$, then
\[ [ (\#^ {a_1} K_1) \#\cdots \# (\#^ {a_n} K_n) ]
=0. \]
Note that $(\#^ {a_1} K_1) \#\cdots \# (\#^ {a_n} K_n)$ 
is a tight fibered knot. 
If Conjecture \ref{conj:baker1} is true, then 
\[   (\#^ {a_1} K_1)    \#\cdots  \# (\#^ {a_n} K_n)  \] is the unknot.
By the prime decomposition theorem of knots, we obtain 
\[ a_1=a_2= \cdots= a_n=0. \]
When 
$a_1 \le 0$, $\ldots $, $a_n \le 0$, then
\[ [  (\#^ {-a_1} K_1) \#\cdots  \# (\#^ {-a_n} K_n) ]
= 0. \] 
By the same argument, 
we obtain 
\[ a_1=a_2= \cdots= a_n=0. \]
For the other case, 
we may assume that 
$a_1 \ge 0$,  $\ldots $, $a_m \ge 0$
and $a_{m+1}  \le 0$,  $\ldots $, $a_n \le 0$
by changing the order of the knots. 
Then we obtain 
\[ a_1 [K_1]  +\cdots + a_m [K_m]
=  (-a_{m+1}) [K_{m+1}] +\cdots + (-a_n) [K_n].\]
Equivalently, 
\[ [  (\#^ {a_1} K_1)    \#\cdots  \# (\#^ {a_m} K_m) ]
= [(\#^ {-a_{m+1}} K_{m+1})    \#\cdots  \# (\#^ {-a_n} K_n)]. \]
Note that  $(\#^ {a_1} K_1)  \#\cdots \# (\#^ {a_m} K_m)$ and 
$(\#^ {-a_{m+1}} K_{m+1})  \#\cdots \# (\#^ {-a_n} K_n)$
are tight fibered knots.
If Conjecture \ref{conj:baker1}  is true,
then 
\[   (\#^ {a_1} K_1)    \#\cdots  \# (\#^ {a_m} K_m) =
(\#^ {-a_{m+1}} K_{m+1})    \#\cdots  \# (\#^ {-a_n} K_n). \]
By the prime decomposition theorem of knots, we obtain
\[ a_1=a_2= \cdots= a_n=0. \vspace{-2em} \]
\end{proof}
Lemma~\ref{lem:baker1} leads us to ask which knots are (prime) tight fibered. 
Recall that \textit{algebraic knots} are links of isolated singularities of complex curves and \textit{$L$-space knots} are those admitting positive Dehn
surgeries to  L-spaces\footnote{
In our definition, the left-handed trefoil is not an $L$-space knot.
Note that some authors define \textit{$L$-space knots} to be those admitting non-trivial  Dehn
surgeries to  L-spaces.
In this definition, the left-handed trefoil is  an $L$-space knot.}. 
\begin{lem} \label{lem:baker2}
We have the following.
\begin{enumerate}
\item A fibered knot is tight if and only if it is strongly quasipositive. \label{quasipositive}
\item An algebraic knot is a prime tight fibered knot. \label{algebra}
\item An $L$-space knot is a prime tight fibered knot. \label{L-space}
\item A divide knot is a tight fibered knot. \label{divide}
\item A positive fibered knot is a tight fibered knot. \label{positive}
\item An almost positive fibered knot is a tight fibered knot. \label{almost-positive}
\end{enumerate}
\end{lem}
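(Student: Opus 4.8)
The plan is to prove each of the six equivalences and implications in Lemma~\ref{lem:baker2} by assembling known characterizations of tight fibered knots, so the main work is locating the right input from the literature rather than constructing anything from scratch. The central fact I would rely on is a theorem of Hedden, which gives a clean characterization: a fibered knot $K$ supports the tight contact structure (in our terminology, is \emph{tight}) if and only if $K$ is strongly quasipositive. This is precisely part~\eqref{quasipositive}, and it will serve as the hub through which the remaining parts are proved. The underlying geometric input is that a fibered knot induces a contact structure via its open book, and that the open book supports the \emph{standard} tight contact structure on $S^3$ exactly when the monodromy is a product of right-handed Dehn twists, which in turn corresponds to strong quasipositivity of the knot.

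Granting part~\eqref{quasipositive}, I would derive the others by showing that the knot classes in \eqref{algebra}--\eqref{almost-positive} are (i) fibered and (ii) strongly quasipositive, together with primeness where claimed. For \eqref{algebra}, algebraic knots are iterated torus knots arising as links of plane curve singularities; they are well known to be fibered, and they are strongly quasipositive (indeed they bound quasipositive Seifert surfaces coming from the Milnor fiber), so they are tight fibered, and links of irreducible singularities are prime. For \eqref{L-space}, I would invoke the fact that $L$-space knots are fibered (a theorem of Ni, building on Ozsv\'ath--Szab\'o) and that a nontrivial $L$-space knot admitting a \emph{positive} surgery to an $L$-space is strongly quasipositive (this is where our footnoted sign convention on $L$-space knots is exactly what rules out the left-handed trefoil and makes the statement correct); primeness of $L$-space knots follows since a connected sum is never an $L$-space knot. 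Parts \eqref{divide}, \eqref{positive}, and \eqref{almost-positive} reduce to checking strong quasipositivity of fibered divide, positive, and almost positive knots: divide knots are known to be (strongly) quasipositive by work of A'Campo and Rudolph, positive knots are strongly quasipositive by Rudolph, and the almost positive case follows from the corresponding strengthening. In each of these three items primeness is not asserted, so only fiberedness and strong quasipositivity are needed.

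I expect the main obstacle to be part~\eqref{quasipositive} itself, since it is the only genuinely nontrivial equivalence and the others are corollaries of it. The forward direction, that a strongly quasipositive fibered knot is tight, and the reverse, that a tight fibered knot is strongly quasipositive, both rest on identifying the contact structure supported by the open book with the unique tight contact structure on $S^3$; the delicate point is the precise dictionary between right-veering monodromies, positivity of the open book, and strong quasipositivity of the fibered knot. I would cite Hedden's theorem for this equivalence rather than reprove it. A secondary subtlety is keeping the sign and orientation conventions consistent throughout, particularly for the $L$-space case where the left/right-handed trefoil distinction is sensitive to the convention fixed in the footnote; once that convention is pinned down, the implications follow uniformly by combining a fiberedness statement with a strong-quasipositivity statement for each knot family.
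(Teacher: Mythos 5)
Your proposal is correct and matches the paper's argument: part (1) is Hedden's equivalence of tightness and strong quasipositivity, and the remaining parts are obtained by verifying fiberedness, strong quasipositivity, and (where claimed) primeness for each family, citing the same sources (Ni and Hedden for $L$-space knots, Krcatovich for their primeness, Nakamura--Rudolph for positive knots, and the authors' own result for almost positive knots). The only minor divergence is in the divide-knot case, where the paper argues via A'Campo's positive-monodromy factorization rather than through strong quasipositivity, but both routes are standard and valid.
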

\begin{proof}
(\ref{quasipositive})
This follows from  \cite[Proposition 2.1]{Hedden} (see also \cite{BI}). 
\par
(\ref{algebra}) It is well known that any algebraic knot is fibered and strongly quasipositive.
By (\ref{quasipositive}), it is tight fibered.
In fact,   any algebraic knot is an iterated cable of a torus knot.
This implies that  it is prime.
For the details on algebraic knots,
see \cite{EN}, \cite{Hillman}, \cite{Wall}.

(\ref{L-space}) By  \cite{Ni1, Ni2} (see also \cite{Ghiggini}, \cite{OZ}), 
an $L$-space knot is fibered.
Hedden \cite{Hedden} proved that it is tight.
It is also known that an $L$-space knot is prime,
see \cite{Krcatovich}.

(\ref{divide}) A'Campo \cite{Acampo2} 
proved that  a divide knot is  fibered
 and its monodromy is a product of positive Dehn twists.
Such a fibered knot is known to be tight,
for example, see Remark 6.5 in \cite{GK}.
For the details, see \cite{book-contact}.

(\ref{positive}) Nakamura \cite{Nakamura} and Rudolph \cite{Rudolph2} proved that any positive knot
is strongly quasipositive.
By (\ref{quasipositive}), a positive fibered knot is tight.

(\ref{almost-positive}) The authors  \cite{ATagami} proved that any  almost positive fibered knot
is strongly quasipositive.
By (\ref{quasipositive}), an almost positive fibered knot is tight.
\end{proof}
\begin{rem}
By  Theorem \ref{thm:baker} and Lemmas~\ref{lem:baker1} and \ref{lem:baker2},
if the slice-ribbon conjecture is true, then the set of L-space knots in $S^3$ is also linearly independent in the knot concordance group. 
\end{rem}
The following conjecture may be manageable  than Conjecture \ref{conj:baker1}.
\begin{conj}
The set of L-space knots  in $S^3$ is linearly independent in the knot concordance group.
In particular, if L-space knots $K_0$ and $K_1$ are concordant, then $K_0=K_1$. 
\end{conj}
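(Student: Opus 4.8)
The plan is to establish the final conjecture as a direct corollary of the machinery already assembled in the excerpt, so the "proof" is really an observation that chains together Theorem~\ref{thm:baker}, Lemma~\ref{lem:baker1}, and Lemma~\ref{lem:baker2}. First I would reduce the displayed "in particular" clause to the linear-independence statement: if two $L$-space knots $K_0$ and $K_1$ are concordant, then $[K_0\#\overline{K_1}]=0$, and since $\overline{K_1}$ represents $-[K_1]$ in the concordance group, this is the relation $1\cdot[K_0]+(-1)\cdot[K_1]=0$, a special case of a vanishing integer combination. Thus it suffices to prove that the set of $L$-space knots is linearly independent, and to extract $K_0=K_1$ from this I would invoke Conjecture~\ref{conj:baker1} exactly as in the proof of Lemma~\ref{lem:baker1}: concordant tight fibered knots are equal.

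Next I would set up the linear-independence argument itself. By Lemma~\ref{lem:baker2}(\ref{L-space}), every $L$-space knot is a \emph{prime} tight fibered knot, so the set of $L$-space knots is a subset of the set of prime tight fibered knots. Linear independence is inherited by subsets of a linearly independent set, so the conclusion follows immediately from Lemma~\ref{lem:baker1} once that lemma's hypothesis, namely Conjecture~\ref{conj:baker1}, is available. Since the statement of the final conjecture is unconditional (it asserts linear independence outright), I would present it as the theorem: \emph{assuming Conjecture~\ref{conj:baker1}}, the set of $L$-space knots is linearly independent; and combined with Theorem~\ref{thm:baker} this also holds assuming the slice-ribbon conjecture, which is precisely the content of the Remark preceding the conjecture.

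Concretely, the key steps in order are: (i) recall from Lemma~\ref{lem:baker2}(\ref{L-space}) that $L$-space knots form a subclass of prime tight fibered knots; (ii) apply Lemma~\ref{lem:baker1} to conclude linear independence of this subclass under Conjecture~\ref{conj:baker1}; and (iii) deduce the "in particular" statement by feeding the relation $[K_0]-[K_1]=0$ through the equality-of-tight-fibered-knots consequence of Conjecture~\ref{conj:baker1}, using the prime decomposition theorem to read off $K_0=K_1$. Because the conjecture as stated is the assertion of a hoped-for fact rather than a claim with a self-contained proof, there is no genuine mathematical obstacle internal to this paper; the only substantive content is the unconditional deduction of linear independence from Conjecture~\ref{conj:baker1}, which is immediate from the preceding lemmas.

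The hard part, and the reason this appears as a \emph{conjecture} rather than a \emph{corollary}, is that Conjecture~\ref{conj:baker1} is itself unproven: its truth rests on the slice-ribbon conjecture via Theorem~\ref{thm:baker}, which is only a one-directional implication. So what I would really be doing is pointing out that this $L$-space statement sits strictly below Conjecture~\ref{conj:baker1} in logical strength while remaining conditional on it, and that the interest of the conjecture is precisely that the class of $L$-space knots is more tractable and better understood (via Floer-theoretic surgery characterizations) than the full class of tight fibered knots, so one might hope to prove it by methods unavailable in the general fibered setting. No unconditional proof is offered here; the statement is flagged as a potentially more approachable target.
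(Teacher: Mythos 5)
The paper offers no proof of this statement---it is an open conjecture---and the only derivable content is the conditional version recorded in the Remark immediately preceding it, which your proposal reproduces exactly: $L$-space knots are prime tight fibered by Lemma~\ref{lem:baker2}(\ref{L-space}), hence form a subset of the linearly independent set of Lemma~\ref{lem:baker1} under Conjecture~\ref{conj:baker1}, and hence under the slice-ribbon conjecture via Theorem~\ref{thm:baker}. Your proposal is correct and takes essentially the same approach as the paper, including the accurate observation that no unconditional proof is available and that the ``in particular'' clause is the special case of a vanishing combination $[K_0]-[K_1]=0$.
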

%
%
%
%
%
%
%
%
\section{Appendix A: Miyazaki's results on ribbon fibered  knots}\label{sec:miyazaki}
In this appendix, we recall  Miyazaki's results \cite{Miyazaki} on non-simple ribbon fibered knots, in particular, on composite ribbon fibered knots. 
\par
Let $K_{i}$ be a knot in a homology $3$-sphere $M_{i}$ for $i=0,1$. 
We write 
\begin{center}$(M_{1}, K_{1})\geq (M_{0}, K_{0})$ (or simply $K_{1}\geq K_{0}$)
\end{center}
if there exist a $4$-manifold $X$ with $H_{*}(X, \Z) \simeq H_{*}(S^3 \times I, \Z)$ and an annulus $A$ embedded into $X$ such that 
\begin{align*}
(\partial X, A\cap \partial X)&=(M_{1}, K_{1})\sqcup (M^{r}_{0}, K^{r}_{0}),\\
\pi_{1}(M_{1}\setminus K_{1})&\rightarrow \pi_{1}(X\setminus A) \text{is surjective, and}\\
\pi_{1}(M_{0}\setminus K_{0})&\rightarrow \pi_{1}(X\setminus A) \text{is injective}, 
\end{align*}
where $(M^{r}_{0}, K^{r}_{0})$ is $(M_{0}, K_{0})$ with reversed orientation. 
We say $K_{1}$ is {\it homotopically ribbon concordant} to $K_{0}$ if $K_{1}\geq K_{0}$. 
Note that this is a generalization of the notion of ``ribbon concordant'', 
see in \cite[Lemma 3.4]{Gordon}. 
A knot $K$ in a homotopy $3$-sphere $M$ is {\it homotopically ribbon} if $K\geq U$, where $U$ is the unknot in $S^{3}$. 
A typical example of a homotopically ribbon knot is a ribbon knot in $S^{3}$ (for detail see \cite[p.3]{Miyazaki}). 
\par
%
\begin{thm}[{\cite[Theorem~5.5]{Miyazaki}}]
For $i=1, \ldots, n$, let $K_{i}$ be a prime fibered knot in a homotopy $3$-sphere $M_{i}$ satisfying one of the following:  
\begin{itemize}
\item $K_{i}$ is minimal with respect to $`` \geq $'' among all fibered knots in homology spheres, 
\item there is no $f(t)\in\mathbf{Z}[t]\setminus\{\pm t^{k}\}_{k}$ such that $f(t)f(t^{-1})|\Delta _{K_{i}}(t)$.  
\end{itemize}
If $K_{1}\#\cdots\#K_{n}$ is homotopically ribbon, then the set 
$\{1, \ldots, n\}$ can be paired into $\sqcup_{s=1}^{m}\{i_{s}, j_{s}\}$ such that $K_{i_{s}}= \overline{K_{j_{s}}}$, 
where $\overline{K_{j_{s}}}$ is $(M^{r}_{j_{s}}, K^{r}_{j_{s}})$. 
\end{thm}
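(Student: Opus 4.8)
The plan is to convert the topological hypothesis into a statement about monodromies by means of the Casson--Gordon machinery reviewed in Appendix~A, and then to let the two alternative hypotheses force a pairing, first homologically and finally at the level of knot types. First I would record that $K:=K_{1}\#\cdots\#K_{n}$ is again a fibered knot, with closed fiber $\widehat{F}=\widehat{F_{1}}\#\cdots\#\widehat{F_{n}}$ and closed monodromy $\widehat{h}=\widehat{h_{1}}\#\cdots\#\widehat{h_{n}}$, where $h_{i}$ denotes the monodromy of $K_{i}$. Since $K$ is homotopically ribbon, the Casson--Gordon criterion for fibered knots produces a handlebody $V$ with $\partial V=\widehat{F}$ over which $\widehat{h}$ extends to a diffeomorphism $H\colon V\to V$. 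This handlebody extension is the single structural input from which everything else will be extracted.

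Next I would take its homological shadow. The kernel $L:=\ker\bigl(H_{1}(\widehat{F};\mathbf{Q})\to H_{1}(V;\mathbf{Q})\bigr)$ is a Lagrangian subspace for the intersection pairing and is invariant under $\widehat{h}_{*}$. The splitting $H_{1}(\widehat{F};\mathbf{Q})=\bigoplus_{i}H_{1}(\widehat{F_{i}};\mathbf{Q})$ is an orthogonal symplectic decomposition preserved by $\widehat{h}_{*}=\bigoplus_{i}(\widehat{h_{i}})_{*}$, and the characteristic polynomial of $(\widehat{h_{i}})_{*}$ is $\Delta_{K_{i}}(t)$ up to units; the existence of $L$ therefore yields a Fox--Milnor factorization $\prod_{i}\Delta_{K_{i}}(t)\doteq g(t)g(t^{-1})$. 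The role of the second bullet is now visible at the block level: an $\widehat{h}_{*}$-invariant Lagrangian inside a single block $H_{1}(\widehat{F_{i}};\mathbf{Q})$ would split $\Delta_{K_{i}}$ as $f(t)f(t^{-1})$, so the hypothesis that $\Delta_{K_{i}}$ admits no such factor is exactly the statement that block $i$ carries no self-contained invariant Lagrangian.

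With this in hand I would force the pairing. If every summand satisfies the second bullet, then $L$ can contain no block-internal Lagrangian, so it must couple the blocks to one another; since $L$ is exactly half-dimensional, the coupling is by graphs of isomorphisms intertwining the $(\widehat{h_{i}})_{*}$, which pairs the indices $\{1,\dots,n\}=\sqcup_{s}\{i_{s},j_{s}\}$ into blocks of equal genus with $\Delta_{K_{i_{s}}}\doteq\Delta_{K_{j_{s}}}$. For an index governed only by the minimality hypothesis I would argue directly with the partial order ``$\geq$'': after splitting off the pairs already produced, the relation $K\geq U$ descends to a relation among the remaining minimal prime summands, and minimality leaves a summand no option but to be annihilated by a mirror partner. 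Either way one arrives at a candidate pairing of the summands.

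The hard part will be upgrading this homological pairing to the geometric identity $K_{i_{s}}=\overline{K_{j_{s}}}$, since an invariant Lagrangian sees only Alexander data. Here I would try to make the handlebody extension $H\colon V\to V$ split compatibly with the prime decomposition: by cutting $V$ along the $\widehat{h}$-invariant spheres that separate the summands $\widehat{F_{i}}$, each piece should exhibit a homotopy ribbon concordance in both directions, $K_{i_{s}}\geq\overline{K_{j_{s}}}$ and $\overline{K_{j_{s}}}\geq K_{i_{s}}$, whence $K_{i_{s}}=\overline{K_{j_{s}}}$ by antisymmetry of ``$\geq$''. Primality is what should make this cutting clean, and minimality (or the sharpened Casson--Gordon analysis permitted by the second bullet) is what should promote the two one-sided relations to equality. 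I expect the genuine obstacle to be precisely this compatible splitting of the handlebody and the passage from homological coupling to an honest pair of opposite homotopy ribbon concordances.
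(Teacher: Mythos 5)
The paper does not prove this statement: it is quoted verbatim from Miyazaki \cite[Theorem~5.5]{Miyazaki} and used as a black box (only Corollary~\ref{cor:Miyazaki} is applied in the proof of Theorem~\ref{thm:main}), so there is no in-paper argument to compare yours against. Measured on its own terms, your sketch does start at the right place --- the Casson--Gordon theorem that the closed monodromy of a fibered homotopy-ribbon knot extends over a handlebody $V$ --- and it correctly identifies the role of the second bullet as forbidding a block-internal invariant Lagrangian. But two of your steps are genuine gaps rather than deferred routine verifications.

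First, an $\widehat{h}_{*}$-invariant Lagrangian $L\subset\bigoplus_{i}H_{1}(\widehat{F_{i}};\mathbf{Q})$ that meets no single block in a Lagrangian of that block need not be a union of graphs of isomorphisms between pairs of blocks: with three or more summands, $L$ can couple several blocks simultaneously, and its projections to the individual blocks need not be half-dimensional. Extracting an honest pairing of $\{1,\dots,n\}$ into two-element subsets is precisely where the hypotheses (primality, the irreducible-factorization condition, minimality with respect to $\geq$) must do real work, and your argument asserts this conclusion rather than deriving it; the minimality case in particular is dispatched in one sentence with no mechanism. Second, and more seriously, the upgrade from matching Alexander data to $K_{i_{s}}=\overline{K_{j_{s}}}$ rests on cutting $V$ along ``$\widehat{h}$-invariant spheres separating the summands,'' but the handlebody produced by Casson--Gordon is not required to respect the connected-sum decomposition of $\widehat{F}$, and no such invariant spheres need exist. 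You flag this yourself as the expected obstacle, which is honest, but it means the proposal stops short of the theorem's actual content: the passage from the homological coupling to a pair of opposite homotopy ribbon concordances, together with the antisymmetry of $\geq$ on fibered knots (itself a nontrivial theorem of Miyazaki, not a formal property), is the heart of the proof and is absent.
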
 
\begin{rem}
By a solution of the geometrization conjecture (see \cite{Perelman1}, 
\cite{Perelman2} and \cite{Perelman3}), each homotopy $3$-sphere $M_{i}$ is $S^{3}$ in the above theorem. 
\end{rem}
As a corollary, we obtain the following. 
\begin{cor}\label{cor:Miyazaki}
Let $K_{0}$ and $K_{1}$ be fibered knots in $S^{3}$ with irreducible Alexander polynomials. 
If $K_{0}\#\overline{K_{1}}$ is ribbon,
then $K_{0}=K_{1}$. 
\end{cor}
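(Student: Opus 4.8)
The plan is to derive Corollary~\ref{cor:Miyazaki} directly from the preceding Theorem (Miyazaki's \cite[Theorem~5.5]{Miyazaki}) by specializing to the case $n=2$. First I would check that the two knots $K_0$ and $\overline{K_1}$, viewed as prime fibered knots in $S^3$, satisfy the hypotheses of the theorem. The key observation is that irreducibility of the Alexander polynomial is precisely the tool that verifies the second bulleted condition: if $\Delta_{K_i}(t)$ is irreducible over $\mathbf{Z}[t]$ (up to units), then there can be no $f(t)\in\mathbf{Z}[t]\setminus\{\pm t^k\}_k$ with $f(t)f(t^{-1})\mid \Delta_{K_i}(t)$, since such a factorization would exhibit a nontrivial divisor of $\Delta_{K_i}(t)$. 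Note also that $\Delta_{\overline{K_1}}(t)$ agrees with $\Delta_{K_1}(t)$ up to the usual symmetry, so irreducibility of $\Delta_{K_1}$ transfers to $\overline{K_1}$.

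Second, I would invoke the hypothesis that $K_0\#\overline{K_1}$ is ribbon. Since a ribbon knot in $S^3$ is homotopically ribbon (as recalled in the excerpt, following \cite[p.3]{Miyazaki}), the connected sum $K_0\#\overline{K_1}$ is homotopically ribbon, so the conclusion of Miyazaki's theorem applies with $n=2$ and the two summands $K_0$ and $\overline{K_1}$. The theorem then asserts that the index set $\{1,2\}$ (indexing the two summands) can be paired as $\{i_s,j_s\}$ with the two knots being mirror-reverses of one another; with only two summands the single pairing forces $K_0=\overline{\overline{K_1}}$, where the overline denotes passing to $(M^r,K^r)$.

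Third, I would unwind this equality of based/oriented knots-in-$S^3$ to conclude $K_0=K_1$. Applying the operation $\overline{\phantom{K}}$ twice returns the original oriented knot, so $\overline{\overline{K_1}}=K_1$ in $S^3$, giving $K_0=K_1$ as desired. I would also remark that the intervening Remark guarantees each homotopy $3$-sphere $M_i$ is genuinely $S^3$ by geometrization, so there is no subtlety in reading the conclusion as an equality of knots in the standard $S^3$.

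The main obstacle I anticipate is bookkeeping around the orientation-reversal conventions: one must be careful that the mirror-image/reverse operation in the statement of Corollary~\ref{cor:Miyazaki} (denoted $\overline{K_1}$) matches the operation $\overline{K_{j_s}}=(M^r_{j_s},K^r_{j_s})$ appearing in Miyazaki's theorem, and that the pairing conclusion really yields $K_0=K_1$ rather than $K_0=\overline{K_1}$ or some other variant. Everything else is a routine specialization; the real content is verifying that the irreducible Alexander polynomial hypothesis cleanly implies the theorem's second condition, which I expect to be essentially immediate but worth stating explicitly.
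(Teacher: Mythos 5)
Your proposal is correct and is precisely the argument the paper intends: the corollary is stated without proof as an immediate specialization of Miyazaki's Theorem~5.5 to $n=2$, with the irreducibility of $\Delta_{K_i}(t)$ ruling out any $f(t)\in\mathbf{Z}[t]\setminus\{\pm t^{k}\}_k$ with $f(t)f(t^{-1})\mid\Delta_{K_i}(t)$, and the unique pairing of $\{1,2\}$ forcing $K_0=\overline{\overline{K_1}}=K_1$. The one point you assert rather than verify is primeness, which Miyazaki's theorem requires but the corollary does not hypothesize; it follows from the same irreducibility assumption, since a composite fibered knot is a connected sum of nontrivial fibered knots, so its Alexander polynomial factors as a product of two monic polynomials of degree at least two and is therefore reducible.
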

\section{Appendix B: Twistings, Annulus twists and Annulus presentations}\label{sec:twist}
In this appendix, 
we recall two operations. 
One is {\it twisting}, and the other is  {\it annulus twist} \cite{AJOT}.  
In a certain case,
annulus twists are expressed in terms of twistings
and preserve  some  properties  of knots. 
Finally, 
we describe  monodromies of the fibered knots obtained 
from $6_3$ (with an annulus presentation) by annulus twists.
We begin this appendix with recalling the definition of an open book decomposition of a $3$-manifold.
\subsection{Open book decompositions}\
Let $F$ be an oriented surface with boundary and $f\colon F\rightarrow F$  a diffeomorphism on $F$ fixing the boundary. 
Consider the pinched mapping torus 
\[\widehat{M}_{f}=F\times [0,1]/_{\sim},\]
where the equivalent relation $\sim$ is defined as follows:
\begin{enumerate}
\item  $(x,1)\sim (f(x),0)$ for $x \in F$, and 
\item  $(x,t)\sim (x,t')$ for $x \in \partial F$ and $t$, $t'$ $\in [0,1]$. 
\end{enumerate}
Here, we orient $[0,1]$ from $0$ to $1$ and we give an orientation of $\widehat{M}_{f}$ by the orientations of $F$ and $[0,1]$.
Let $M$ be a closed  oriented $3$-manifold.
If there exists an orientation-preserving  diffeomorphism from $\widehat{M}_{f}$ to $M$, 
the pair $(F, f)$ is called an \textit{open book decomposition} of $M$. 
The map $f$ is called the \textit{monodromy} of $(F, f)$. 
Note that we can regard $F$ as a surface in $M$.
The boundary of $F$ in $M$, denoted by $L$,
is called a \textit{fibered link} in $M$, and $F$ is called a \textit{fiber surface} of $L$. 
The \textit{monodromy of $L$} is defined by the monodromy $f$ of the open book decomposition $(F, f)$. 
\subsection{Twistings and annulus twists}\
Let $M$ be a closed  oriented $3$-manifold, and $(F, f)$  an open book decomposition of  $M$.
Let $C$ be a simple closed curve on  a fiber surface  $F \subset M$.
Then,  {\it a twisting along $C$ of order $n$} is defined as performing $(1/n)$-surgery along $C$ with respect to the framing determined by $F$. 
Then we obtain the following.
\begin{lem}[Stallings]\label{lem:Stallings}
The resulting manifold obtained from $M$ by a twisting along $C$ of order $n$
is (orientation-preservingly) diffeomorphic to $\widehat{M}_{t_{C}^{-n}\circ f}$.
\end{lem}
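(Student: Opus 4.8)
The plan is to show that the two closed oriented $3$-manifolds $M=\widehat{M}_{f}$ and $\widehat{M}_{t_{C}^{-n}\circ f}$ are obtained by regluing one and the same complement to a solid torus in two ways, and that the difference between these two regluings is exactly the reattachment defining $1/n$-surgery along $C$ with respect to the page framing. First I would set up a clean local model. Since $C$ lies on the fiber surface, realize $C$ on an interior page $F\times\{1/2\}$ and isotope the open book so that the monodromy is concentrated in $[0,1/2]$, i.e.\ the flow glues by $f$ across $[0,1/2]$ and by the identity across $[1/2,1]$; then the total monodromy read off over $[0,1]$ is still $f$. Let $A=S^{1}\times[-1,1]\subset F$ be an annular neighborhood of $C=S^{1}\times\{0\}$ containing the support of the Dehn twist $t_{C}^{-n}$, and set $N=A\times[1/2-\epsilon,1/2+\epsilon]$. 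This $N$ is a genuine product solid torus in $M$ with core $C$ and with $N\cap(F\times\{1/2\})=A$.

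Next I would identify the monodromy change with a cut-and-reglue supported in $N$. Cutting $N$ along its middle annulus $A\times\{1/2\}$ into two half-solid-tori and regluing them by $t_{C}^{-n}$ inserts the twist $t_{C}^{-n}$ at the page $F\times\{1/2\}\cong F$; since the flow on $[1/2,1]$ is the identity and on $[0,1/2]$ is $f$, the resulting open book has page $F$ and monodromy $t_{C}^{-n}\circ f$. Because $t_{C}^{-n}$ is supported in $A$ and the reglued solid torus $N'$ agrees with $N$ near $\partial N$, the complement $M\setminus\operatorname{int}N$ is left untouched, together with its restricted open-book fibration; so $\widehat{M}_{t_{C}^{-n}\circ f}$ is gotten from $M$ by replacing $N$ with $N'$, a modification localized to the solid torus $N$.

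The heart of the argument is then a computation on the boundary torus $\partial N=T^{2}$. Fixing the meridian $\mu=\partial(\{pt\}\times D^{2})$ and the page-framing longitude $\lambda$ (the push-off of $C$ inside $A\subset F$) as a basis of $H_{1}(\partial N)$, I would check that inserting $t_{C}^{-n}$ shears the meridian disk of $N$, so that a meridian of the reglued $N'$ represents $\mu+n\lambda$ in $H_{1}(\partial N)$ while $\lambda$ is preserved; equivalently the two regluings differ by the torus homeomorphism with matrix $\bigl(\begin{smallmatrix}1&0\\ n&1\end{smallmatrix}\bigr)$ in the basis $\{\mu,\lambda\}$. This is precisely the reattachment that fills $N$ with slope $1/n$ relative to the page framing, which is the twisting along $C$ of order $n$. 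Finally, since the open-book fibration on $M\setminus\operatorname{int}N$ extends over $N'$ with page $F$ and monodromy $t_{C}^{-n}\circ f$, the resulting diffeomorphism is orientation-preserving, giving the claimed identification with $\widehat{M}_{t_{C}^{-n}\circ f}$.

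I expect the main obstacle to be the orientation and framing bookkeeping rather than any conceptual difficulty: pinning down the sign so that $t_{C}^{-n}$ (and not $t_{C}^{+n}$) corresponds to the coefficient $+1/n$, fixing the composition order $t_{C}^{-n}\circ f$ (versus the conjugate $f\circ t_{C}^{-n}$, which describes the same manifold with surgery curve $f(C)$), and verifying that the orientation of $\widehat{M}_{t_{C}^{-n}\circ f}$ induced by the orientations of $F$ and of $[0,1]$ matches that of the surgered manifold. I would resolve these by fixing once and for all the orientations of $C$, of the page-normal direction spanning $A$, and of the $[0,1]$-(time) direction used to orient $\widehat{M}_{f}$, and then reading the induced map on $H_{1}(\partial N)$ directly off the profile of the Dehn twist $t_{C}^{-n}$ supported in $A$.
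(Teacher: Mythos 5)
Your proposal is correct and follows essentially the same route as the paper, whose proof is the picture in Figure~\ref{figure:mapping_torus}: remove a tubular neighborhood of $C$ sitting in a page, observe that both the surgered manifold and $\widehat{M}_{t_{C}^{-n}\circ f}$ restore the same complement, and identify the two fillings by a twist of the solid torus (equivalently, your shear $\mu\mapsto\mu+n\lambda$ on the boundary torus). Your version just makes the framing/meridian bookkeeping explicit where the paper relies on the figure and the references to Bonahon, Ozbagci--Stipsicz and Stallings.
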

For a proof of this  lemma,
see Figure~\ref{figure:mapping_torus} (see also  \cite{Bonahon}, \cite{book-contact} and \cite{Stallings}).
Lemma \ref{lem:Stallings} implies that, by a  twisting along $C$ of order $n$,
the fibered link with monodromy $f$ 
is changed into the fibered link with monodromy $t_{C}^{-n}\circ f$. 
\begin{figure}[htbp]
\centering
\includegraphics[scale=0.58]{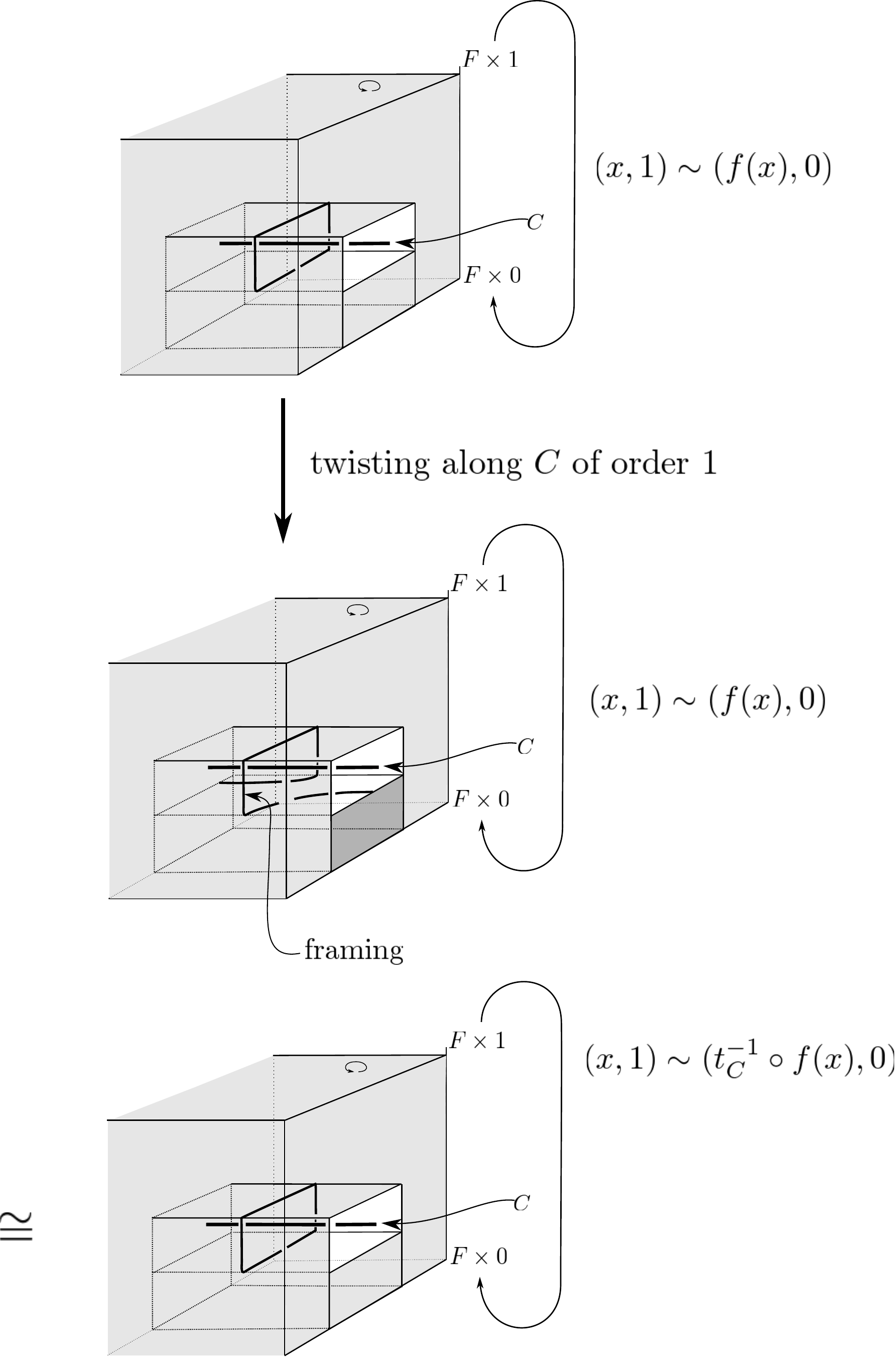}
\caption{
The top picture is $\widehat{M}_{f}$,
the middle picture is the resulting manifold obtained from $\widehat{M}_{f}$ by a twisting along $C$ of order $1$ and the bottom picture is $\widehat{M}_{t_{C}^{-1}\circ f}$. 
In the pictures, we remove a tubular neighborhood of $C$. 
The last diffeomorphism is given by twisting the deep gray area
(which is the solid torus below the neighborhood of $C$)   in the middle picture to the left. 
}
\label{figure:mapping_torus}
\end{figure}
\begin{rem}
Our definition on the pinched mapping torus differs from Bonahon's \cite{Bonahon}. 
We glue $(x,1)$ and $(f(x), 0)$ in the pinched mapping torus. 
On the other hand, $(x, 0)$ and $(f(x), 1)$ are glued in Bonahon's paper. 
\end{rem}
\par
%
%
%
Hereafter we only deal with the $3$-sphere $S^3$.
Let $A\subset S^{3}$ be an embedded annulus and $\partial A=c_{1}\cup c_{2}$. 
Note that $A$ may be knotted and twisted. 
In Figure~\ref{figure:annulus}, 
we draw an unknotted  and twisted annulus. 
An {\it $n$-fold annulus twist along $A$} is to apply $(+1/n)$-surgery along $c_{1}$ and $(-1/n)$-surgery along $c_{2}$ with respect to the framing determined by the annulus $A$. For simplicity, we call a $1$-fold annulus twist along $A$ 
an \textit{annulus twist along $A$}.
\begin{rem}
An $n$-fold annulus twist does not change the ambient $3$-manifold $S^{3}$,
(see \cite[Lemma 2.1]{ATange1} or \cite[Theorem 2.1]{Osoinach}).
However, each surgery along $c_{1}$ (resp. surgery along $c_{2}$) 
often changes the ambient $3$-manifold $S^{3}$.
For example, if $A$ is an unknotted annulus with $k$ full-twists,
the $n$-fold annulus twist along $A$ is to apply $(k+1/n)$-surgery along $c_{1}$ and $(k-1/n)$-surgery along $c_{2}$
with respect to the Seifert framings. Therefore 
 each surgery along $c_{1}$  (resp.~surgery along $c_{2}$) 
indeed 
changes the ambient $3$-manifold $S^{3}$ frequently.
\end{rem}
\begin{figure}[htbp]
\centering
\includegraphics[scale=0.4]{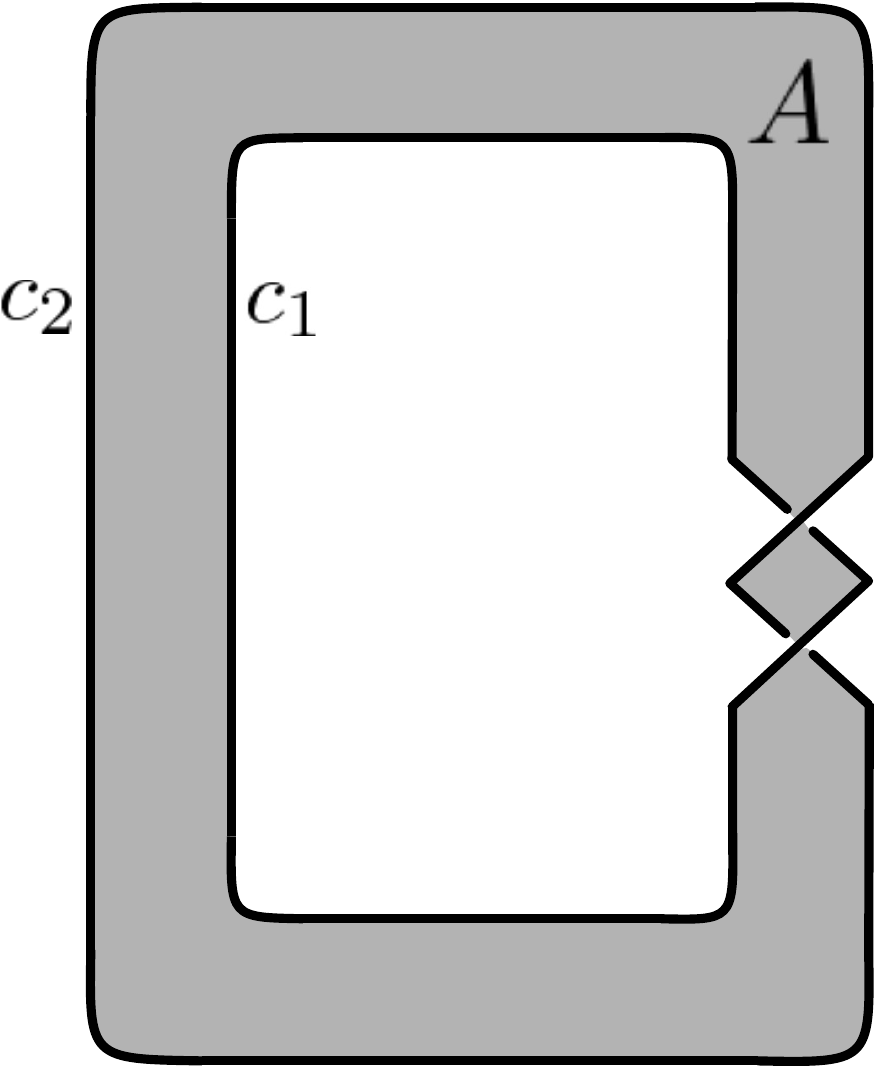}
\caption{An unknotted annulus $A\subset S^{3}$ with a $+1$ full-twist. }\label{figure:annulus}
\end{figure}
\subsection{Annulus presentations}\label{subsec:annulus-pre}\ 
The first author, Jong, Omae and Takeuchi \cite{AJOT}
 introduced the notion of an annulus presentation of a knot 
(in their paper it is called ``band presentation"). 
Here, we extend the definition of annulus presentations of knots. \par
Let $A\subset S^{3}$ be an embedded annulus with $\partial A=c_{1}\cup c_{2}$,
which may be knotted and twisted. 
Take an embedding of a band $b\colon I\times I\rightarrow S^{3}$ such that 
\begin{itemize}
\item $b(I\times I)\cap \partial A=b(\partial I\times I)$, 
\item $b(I\times I)\cap \operatorname{int} A$ consists of ribbon singularities, and 
\item $A\cup b(I\times I)$ is an immersion of an orientable surface, 
\end{itemize}
where $I=[0, 1]$. 
If a knot $K$ is isotopic to the knot $(\partial A\setminus b(\partial I\times I))\cup b(I\times \partial I)$, 
then we say that $K$ admits an {\it annulus presentation} $(A, b)$.
\begin{figure}[htbp]
\centering
\includegraphics[scale=0.8]{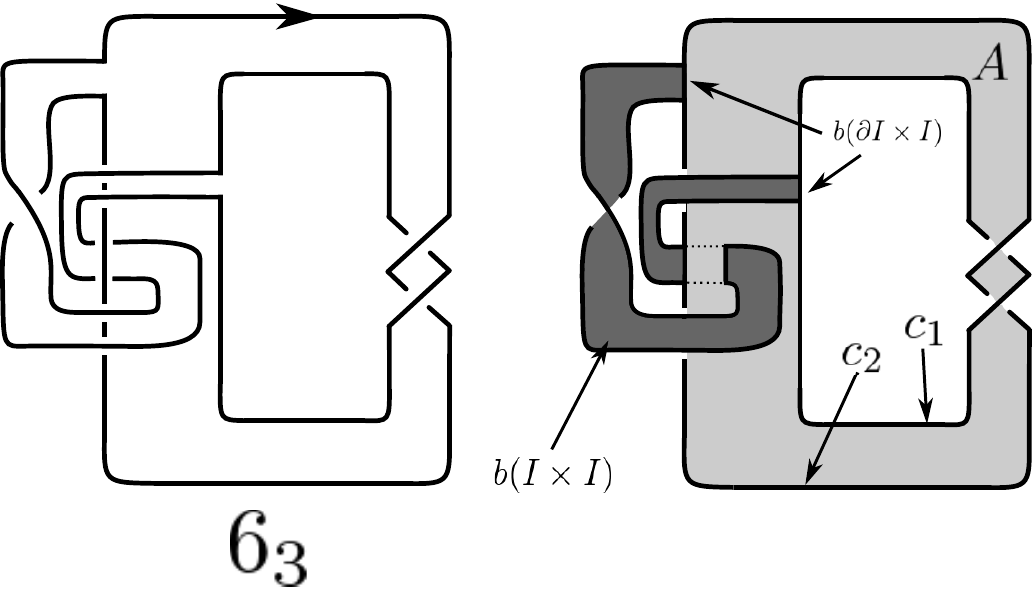}
\caption{The definitions of the knot $6_{3}$ (left)  and its annulus presentation (right). 
}\label{figure:annulus-pre}
\end{figure}
\begin{ex}
The knot $6_{3}$ (with an arbitrary orientation) 
 admits an annulus presentation $(A, b)$, see  Figure \ref{figure:annulus-pre}. 
\end{ex}
Let $K$ be a knot admitting an annulus presentation $(A, b)$. 
Then, by $A^{n}(K)$, 
we denote the knot obtained from $K$ by  $n$-fold annulus twist along $\widetilde{A}$ with 
$\partial\widetilde{A}=\widetilde{c}_{1}\cup \widetilde{c}_{2}$, where $\widetilde{A}\subset A$ is a shrunken annulus. 
Namely,  
$\overline{A\setminus \widetilde{A}}$ is a disjoint union of two annuli, 
each $\widetilde{c}_{i}$ is isotopic to $c_{i}$ in $\overline{A\setminus \widetilde{A}}$ for $i=1,2$ and 
$A\setminus (\partial A\cup \widetilde{A})$ does not intersect $b(I\times I)$. 
\begin{figure}[h]
\centering
\includegraphics[scale=0.8]{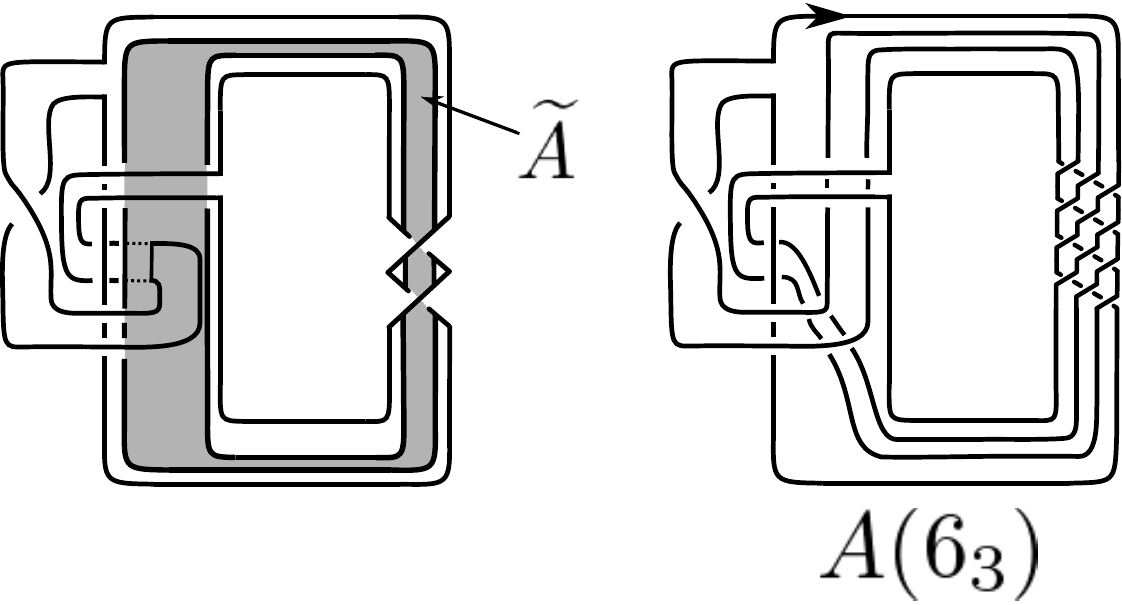}
\caption{A shrunken annulus $\tilde{A}$ for the annulus presentation of $6_{3}$ (left)  and
the knot  $A(6_{3})$ (right). 
}\label{figure:annulus-twist}
\end{figure}
For simplicity, we denote  $A^{1}(K)$ by $A(K)$.
\begin{ex}
We consider the knot $6_{3}$
with the annulus presentation $(A,b)$ in  Figure \ref{figure:annulus-pre}.
Then $A(6_{3})$ is the right picture in Figure \ref{figure:annulus-twist}.
\end{ex}
The following important lemma is a special case of Osoinach's result   \cite[Theorem 2.3]{Osoinach}. 
\begin{lem}\label{lem:0-surgery} 
Let $K$ be a knot admitting an annulus presentation $(A, b)$. 
Then, the $3$-manifold $M_{A^{n}(K)}(0)$ does not depend on $n\in\mathbf{Z}$. 
\end{lem}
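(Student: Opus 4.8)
The plan is to reduce Lemma~\ref{lem:0-surgery} to the annulus-twist invariance already recalled earlier, namely the observation that an $n$-fold annulus twist along an annulus $A\subset S^3$ leaves the ambient manifold $S^3$ unchanged (cited from \cite[Theorem 2.1]{Osoinach}). The key point is that performing $0$-surgery on the knot $A^n(K)$ and then doing the annulus twist along the \emph{shrunken} annulus $\widetilde{A}$ are operations supported on disjoint regions of $S^3$, so they commute; the crux is to verify that the $0$-framing of the knot is preserved under the annulus twist, independently of $n$.

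First I would set up the surgery description carefully. Let $K$ admit the annulus presentation $(A,b)$, and let $\widetilde{A}\subset A$ be the shrunken annulus with $\partial\widetilde{A}=\widetilde{c}_1\cup\widetilde{c}_2$. By the definition in Section~\ref{subsec:annulus-pre}, the band $b(I\times I)$ does not meet $A\setminus(\partial A\cup\widetilde{A})$, so the knot $K$ and the curves $\widetilde{c}_1,\widetilde{c}_2$ all lie in a neighborhood of $A$, and $\widetilde{c}_1,\widetilde{c}_2$ cobound the subannulus $\widetilde{A}$ with the boundary of the band region. The manifold $M_{A^n(K)}(0)$ is then obtained from $S^3$ by a single framed surgery diagram consisting of three components: the knot $A^n(K)$ with framing $0$, together with $\widetilde{c}_1$ with coefficient $+1/n$ and $\widetilde{c}_2$ with coefficient $-1/n$ (all framings with respect to $A$), where the annulus-twist surgery on $\widetilde{c}_1\cup\widetilde{c}_2$ is precisely what turns $K$ into $A^n(K)$ inside $S^3$.

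The second step is the commutation argument. I would observe that in the surgery diagram for $M_{A^n(K)}(0)$ one may equally well read it as: start with $K$ equipped with its $0$-framing, then perform the $n$-fold annulus twist along $\widetilde{A}$ (i.e.\ the $\pm 1/n$ surgeries on $\widetilde{c}_1,\widetilde{c}_2$). Since the annulus twist is an operation on a neighborhood $N(\widetilde{A})$ of $\widetilde{A}$, and since the $0$-framing curve of $K$ can be pushed off so that it either lies outside $N(\widetilde{A})$ or meets it in arcs whose linking data is dictated by the annulus $A$, the $0$-framing of $K$ is carried to the $0$-framing of $A^n(K)$ by the annulus twist. Concretely, the annulus $A$ provides a canonical Seifert-type surface piece that realizes the same framing before and after twisting, which is exactly why the surgery coefficient remains $0$ and is independent of $n$. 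Thus $M_{A^n(K)}(0)$ is the result of $0$-surgery on $K$ (living in $S^3$, by the ambient invariance of the annulus twist) followed by a homeomorphism that does not depend on $n$ in its effect on the surgered manifold.

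The main obstacle I anticipate is precisely the framing bookkeeping: one must check that the $0$-framing is the correct framing transported by the annulus twist, rather than a framing that shifts with $n$. This is where I would be most careful, tracking the linking numbers $\operatorname{lk}(K,\widetilde{c}_i)$ against the annulus framing and verifying that the $+1/n$ and $-1/n$ contributions cancel in the slam-dunk / Rolfsen-twist computation, leaving the knot framing fixed at $0$ for every $n$. Once this cancellation is established, the independence of $M_{A^n(K)}(0)$ on $n$ follows immediately, and the lemma is a direct specialization of \cite[Theorem 2.3]{Osoinach} to knots presented by $(A,b)$.
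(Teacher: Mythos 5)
The paper offers no proof of this lemma at all: it is stated as a special case of Osoinach's Theorem~2.3 and used as a black box, so your argument is not competing with an in-paper proof but reconstructing the external one. Your reconstruction follows what is in fact the standard route (as in \cite{Osoinach} and \cite{AJOT}): realize $M_{A^{n}(K)}(0)$ by the three-component framed link $K\cup\widetilde{c}_{1}\cup\widetilde{c}_{2}$ with coefficients $(0,+1/n,-1/n)$, then absorb the $\pm 1/n$ surgeries using the fact that an annulus twist is supported in a neighborhood of $\widetilde{A}$ and therefore returns any ambient $3$-manifold---in particular $M_{K}(0)$---to itself. Two points need tightening. First, the three-component diagram should carry $K$, not $A^{n}(K)$; blowing down $\widetilde{c}_{1}\cup\widetilde{c}_{2}$ is precisely what produces $A^{n}(K)$, and listing both is double-counting (you implicitly correct this later). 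Second, the framing verification you defer does close up, and here is why: since $A\cup b(I\times I)$ is orientable, $K$ carries the boundary orientation of $A$ band-summed along $b$, and because $b$ meets $A$ only along $\partial A$ and in the interior of $\widetilde{A}$, one gets $[K]=[c_{1}]+[c_{2}]$ in $H_{1}(S^{3}\setminus\widetilde{c}_{i})$; as $c_{1}$ and $c_{2}$ are anti-parallel copies of $\widetilde{c}_{i}$ inside $A$, their linking numbers with $\widetilde{c}_{i}$ are $+w$ and $-w$ ($w$ the framing of $A$), so $\operatorname{lk}(K,\widetilde{c}_{1})=\operatorname{lk}(K,\widetilde{c}_{2})=0$ and the Seifert framing of $K$ is carried to the Seifert framing of $A^{n}(K)$ with no $n$-dependence. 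One caveat: do not phrase this literally as a Rolfsen twist or slam-dunk, since $\widetilde{c}_{1}$ and $\widetilde{c}_{2}$ need not be unknots when $A$ is knotted; the framing change must be read off from the annulus-twist diffeomorphism (or from the homological computation above). With these repairs your argument is a complete proof and is more self-contained than the paper's citation.
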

%
%
\subsection{Compatible annulus presentations and twistings}\label{subsec:com-anuu}\
Let $K\subset S^{3}$ be a fibered knot admitting an annulus presentation $(A, b)$,
and  $F$  a fiber surface of $K$. 
We say that $(A, b)$ is {\it compatible with  $F$} if there exist simple closed curves $c'_{1}$ and $c'_{2}$ on $F$ such that 
\begin{itemize}
\item 
$\partial \widetilde{A}=\widetilde{c}_{1}\cup \widetilde{c}_{2}$ is isotopic to $c'_{1}\cup c'_{2}$ in $S^{3}\setminus K$, where $\widetilde{A}\subset A$ is a shrunken annulus defined in Section~\ref{subsec:annulus-pre}, and 
\item each annular neighborhood of $c'_{i}$ in $F$ ($i=1,2$) is isotopic to $A$ in $S^{3}$.
\end{itemize}
Let $\widetilde{c}_{1}\cup \widetilde{c}_{2}$ be the framed link with framing $(1/n, -1/n)$ with respect to the framing determined by the annulus $A$,
and  $c'_{1}\cup c'_{2}$  the framed link with framing $(1/n, -1/n)$ with respect to the framing determined by the fiber surface $F$. 
Then, by the first compatible condition, 
$\widetilde{c}_{1}\cup \widetilde{c}_{2}$ is equal to $c'_{1}\cup c'_{2}$ as links in $S^{3}\setminus K$. 
Moreover, by the second compatible condition, their framings coincide. 
As a result, $\widetilde{c}_{1}\cup \widetilde{c}_{2}$ is equal to $c'_{1}\cup c'_{2}$ as framed links in $S^{3}\setminus K$. 
Hence, if $K$ is a fibered knot with $(A, b)$ which is compatible with the fiber surface $F$,
 then $A^{n}(K)$ is the knot obtained from $K$ by twistings along $c'_{1}$ and $c'_{2}$ of order $+n$ and $-n$, respectively. 
In particular, 
$A^{n}(K)$ is a fibered knot and the monodromy of $A^{n}(K)$ is 
$t_{c'_{1}}^{-n}\circ t_{c'_{2}}^{n}\circ f$, 
where $f$ is the  monodromy of $K$.
As a summary,
we obtain the following. 
\begin{lem}\label{lem:annulus_fiber}
Let $K\subset S^{3}$ be a fibered knot admitting a compatible annulus presentation $(A, b)$. 
Then $A^{n}(K)$ is also fibered for any $n\in \mathbf{Z}$. 
Moreover, the monodromy of $A^{n}(K)$ is 
\[
t_{c'_{1}}^{-n}\circ t_{c'_{2}}^{n}\circ f, 
\]
where $f$ is the monodromy of $K$, and $c'_{1}$ and $c'_{2}$ are simple closed curves which give the compatibility of $(A, b)$. 
\end{lem}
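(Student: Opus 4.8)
The plan is to prove Lemma~\ref{lem:annulus_fiber} by reducing the $n$-fold annulus twist to a pair of twistings and then applying Stallings' Lemma~\ref{lem:Stallings} twice. The entire setup in Section~\ref{subsec:com-anuu} has already done the conceptual work; what remains is to assemble it cleanly. First I would invoke the compatibility hypothesis to replace the annulus twist along the shrunken annulus $\widetilde{A}$ with twistings along curves sitting on the fiber surface $F$. Concretely, since $(A,b)$ is compatible with $F$, there are simple closed curves $c'_1, c'_2 \subset F$ such that the framed link $\widetilde{c}_1 \cup \widetilde{c}_2$ (framing $(1/n, -1/n)$ from $A$) coincides with $c'_1 \cup c'_2$ (framing $(1/n,-1/n)$ from $F$) as framed links in $S^3 \setminus K$. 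This identification of framed links is exactly what makes the surgery description of $A^n(K)$ agree with the surgery description of the twistings, so the two operations produce the same knot in the same $S^3$.

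Next I would identify the $n$-fold annulus twist with a composition of two twistings on curves that lie on a single fiber surface. The definition of the $n$-fold annulus twist is $(+1/n)$-surgery along $\widetilde{c}_1$ and $(-1/n)$-surgery along $\widetilde{c}_2$; under the framed-link identification above, this is the same as performing a twisting along $c'_1$ of order $+n$ and a twisting along $c'_2$ of order $-n$ (the sign bookkeeping here matches the convention that a twisting of order $m$ is $(1/m)$-surgery with the fiber framing). Because $c'_1$ and $c'_2$ both lie on the fiber surface $F$, I can apply Lemma~\ref{lem:Stallings} to each twisting in turn: the first twisting changes the monodromy from $f$ to $t_{c'_2}^{n} \circ f$, and the second changes it to $t_{c'_1}^{-n} \circ t_{c'_2}^{n} \circ f$. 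In particular, each step keeps the resulting manifold in the form $\widehat{M}_{g}$ for an explicit diffeomorphism $g$ fixing $\partial F$, so $A^n(K)$ is again fibered with fiber surface $F$ and monodromy $t_{c'_1}^{-n} \circ t_{c'_2}^{n} \circ f$.

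The main obstacle I anticipate is not any hard computation but rather the care needed in the bookkeeping when applying Stallings' Lemma twice, since the two twistings are performed along curves on the \emph{same} fiber surface. After the first twisting along $c'_2$, the ambient manifold is reinterpreted as $\widehat{M}_{t_{c'_2}^{n}\circ f}$, and I must check that $c'_1$ still sits on the fiber surface of this new open book with the correct framing before applying Lemma~\ref{lem:Stallings} again. This is where the second compatibility condition — that an annular neighborhood of each $c'_i$ in $F$ is isotopic to $A$ in $S^3$ — does its work: it guarantees that the framings used for the two surgeries genuinely agree with the fiber framing, so that the hypotheses of Lemma~\ref{lem:Stallings} are met at each stage and the signs $(+n, -n)$ translate into the monodromy factors $t_{c'_1}^{-n}$ and $t_{c'_2}^{n}$ as claimed. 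Once this compatibility of framings is verified, the conclusion that $A^n(K)$ is fibered with the stated monodromy follows immediately, and the lemma is proved.
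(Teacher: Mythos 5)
Your proposal is correct and follows essentially the same route as the paper: use the two compatibility conditions to identify the $n$-fold annulus twist along $\widetilde{A}$ with twistings of order $+n$ along $c'_1$ and $-n$ along $c'_2$ on the fiber surface $F$, then apply Lemma~\ref{lem:Stallings} to convert each twisting into composition with $t_{c'_1}^{-n}$ and $t_{c'_2}^{n}$. Your extra remark about checking that $c'_1$ still lies on the fiber of the new open book after the first twisting is a reasonable point of care that the paper passes over silently, but it does not change the argument.
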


\begin{figure}[tbp]
\centering
\includegraphics[scale=0.75]{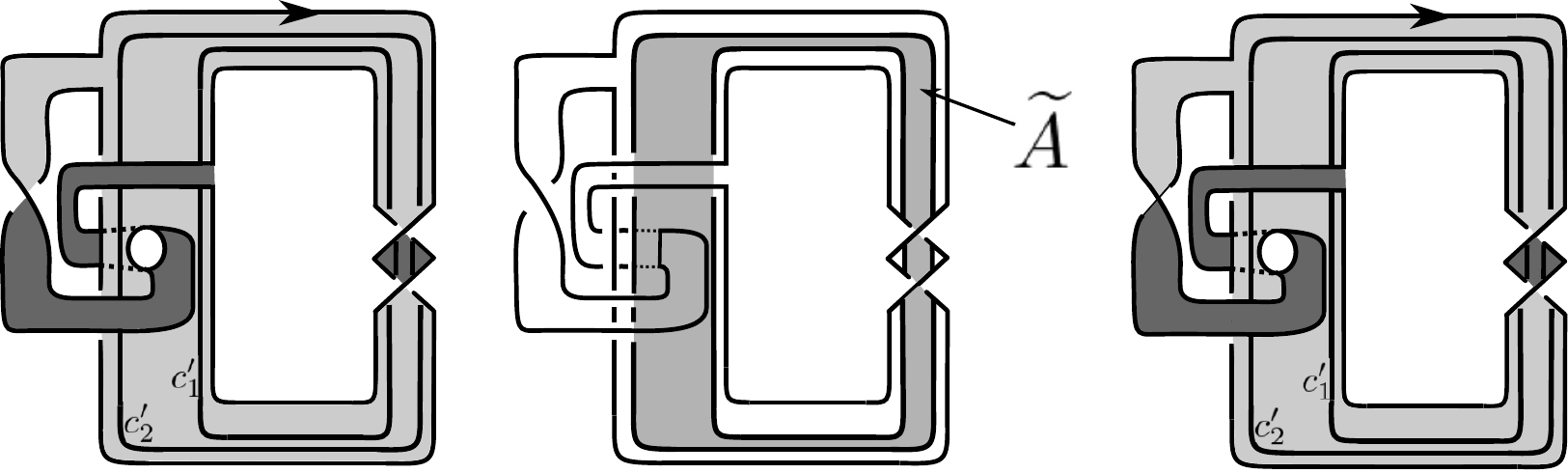}
\caption{A fiber surface $F$ of $6_3$ (left) and a shrunken annulus $\widetilde{A}$ (center).
The annulus presentation $(A,b)$ of  $6_{3}$ is compatible with the fiber surface $F$ (right). 
}\label{figure:annulus-presentation}
\end{figure}

\begin{ex}
We consider the knot $6_3$ with the annulus presentation $(A, b)$  in  Figure \ref{figure:annulus-pre}.
It is known that $6_3$ is fibered.
We choose a fiber surface 
as in the left picture in Figure~\ref{figure:annulus-presentation},
and denote it by $F$.
In this case,  the annulus presentation $(A, b)$ is compatible with $F$.
Indeed we define simple closed curves $c'_{1}$ and $c'_{2}$ on $F$ 
by  $\widetilde{c}_{1}$ and $\widetilde{c}_{2}$,
where  $\partial \widetilde{A}=\widetilde{c}_{1}\cup \widetilde{c}_{2}$.
Then $c'_{1}\cup c'_{2}$ clearly satisfies the compatible conditions.
\end{ex}

\begin{rem}\label{rem:gabai}
Let $K_{1}$ and $K_{2}$ be knots which have the same $0$-surgery.
Gabai \cite{Gabai} proved that if $K_{1}$ is fibered, then $K_{2}$ is also fibered. 
Here let $K$ be a fibered knot admitting an annulus presentation $(A, b)$ (which may not be compatible with the fiber surface for $K$). Then, by Lemma~$\ref{lem:0-surgery}$ and the above fact (Gabai's theorem),
$A^{n}(K)$ is also fibered. 
\end{rem}
\subsection{The monodromy of $A^{n}(6_{3})$}\
At first, we describe the monodromy of $6_{3}$.
We draw a fiber surface of $6_{3}$ as a plumbing of some Hopf bands (see Figure~\ref{figure:monodromy}). 
From  Figures~\ref{figure:monodromy}  and \ref{figure:monodromy3},
the monodromy of $6_{3}$ is given by $ t^{-1}_{d}\circ t_{b}\circ t^{-1}_{c}\circ t_{a}$. 
\par
Now  we describe the monodromy of $A^{n}(6_{3})$. 
From Figures~\ref{figure:monodromy2}, \ref{figure:monodromy3}, and 
Lemma~\ref{lem:annulus_fiber}, the monodromy $f_{n}$ of $A^{n}(6_{3})$ is given by $t_{c'_{1}}^{-n}\circ t_{c'_{2}}^{n}\circ t^{-1}_{d}\circ t_{b}\circ t^{-1}_{c}\circ t_{a}$. 
\begin{figure}[h]
\centering
\includegraphics[scale=0.9]{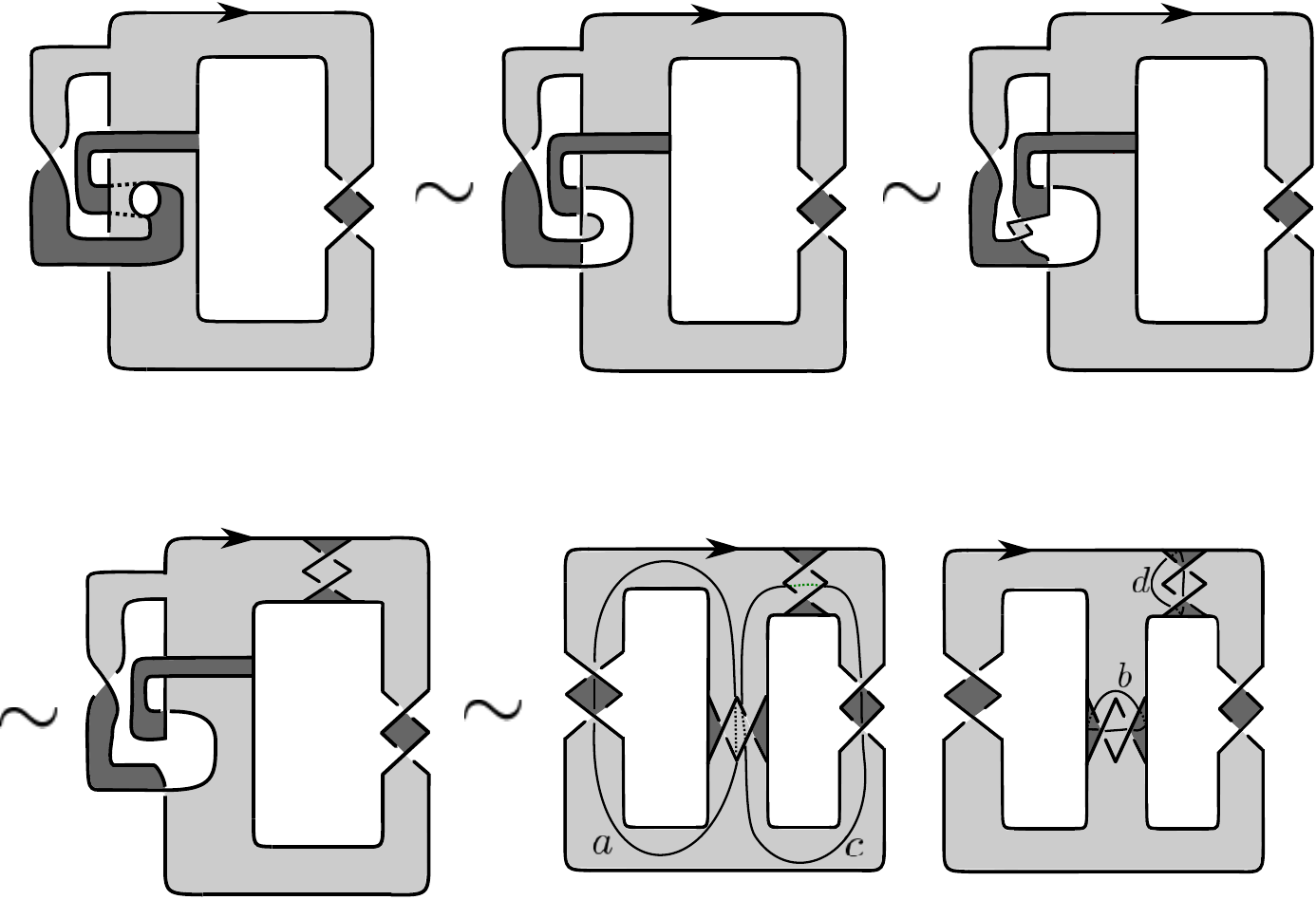}
\caption{The bottom right pictures are fiber surfaces of $6_{3}$ given by a plumbing of some Hopf bands. 
The loops $a$, $b$, $c$ and $d$ are core lines of these Hopf bands. }
\label{figure:monodromy}
\end{figure}
\begin{figure}[h]
\centering
\includegraphics[scale=0.9]{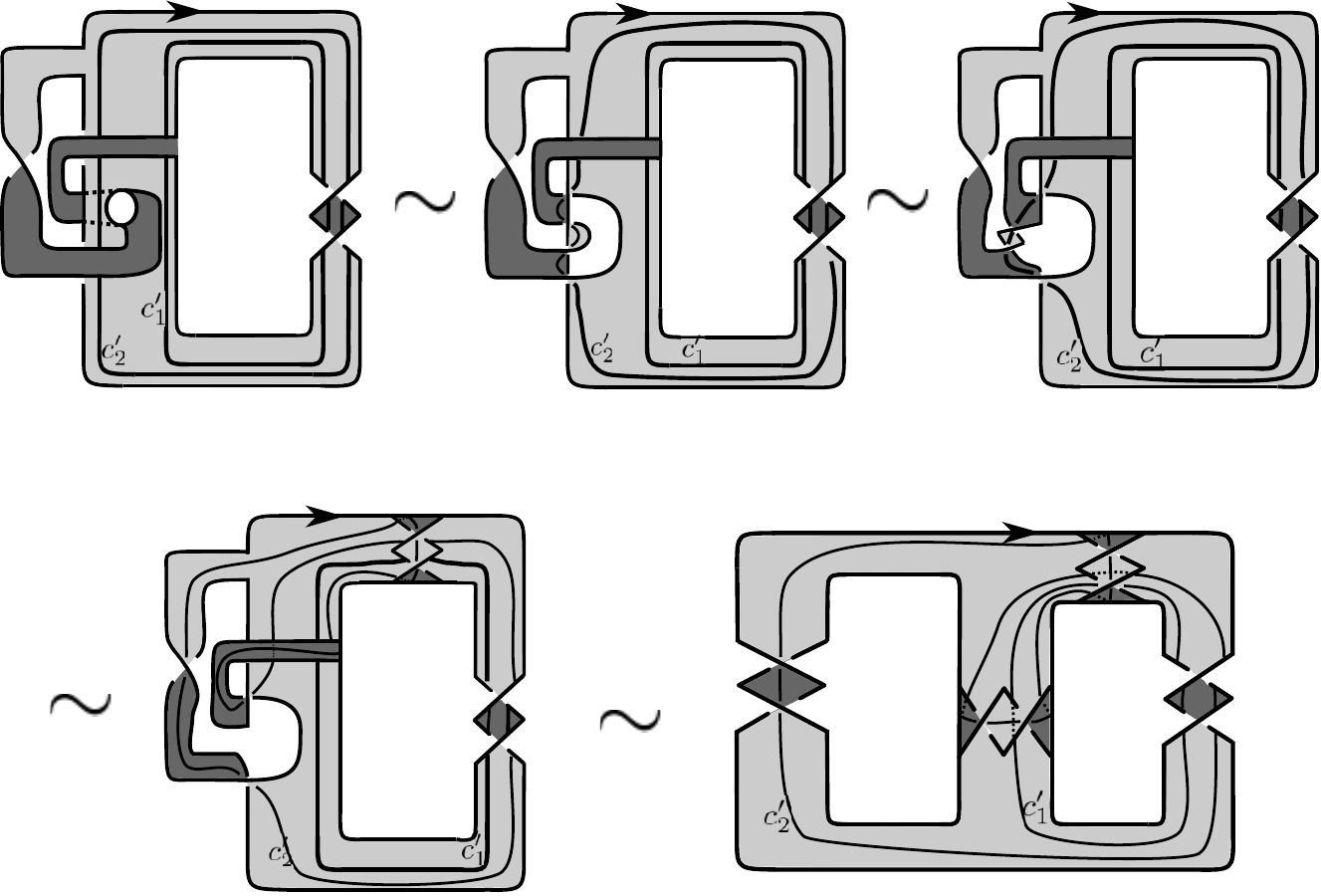}
\caption{The simple closed curves $c'_{1}$ and $c'_{2}$ on the fiber surface of $6_{3}$. }
\label{figure:monodromy2}
\end{figure}
\begin{figure}[h]
\centering
\includegraphics[scale=1.3]{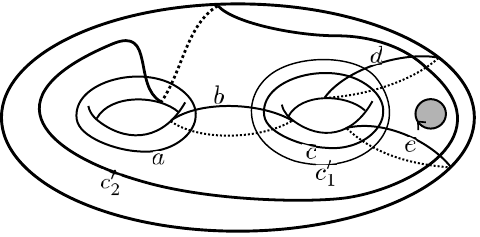}
\caption{The monodromy $f_{n}$ of $A^{n}(6_{3})$ is $t_{c'_{2}}^{n}\circ t_{c'_{1}}^{-n}\circ t^{-1}_{d}\circ t_{b}\circ t^{-1}_{c}\circ t_{a}$. This is equal to $t_{a}^{-1}\circ t_{b}^{-1}\circ t_{e}\circ t_{c}^{n}\circ t_{e}^{-1}\circ t_{b}\circ t_{a}\circ t_{c}^{-n}\circ t^{-1}_{d}\circ t_{b}\circ t^{-1}_{c}\circ t_{a}$, where $e$ is the circle depicted in this picture.}
\label{figure:monodromy3}
\end{figure}
\par
Let $K_{n}$ be the fibered knot $A^{n}(6_{3})$.
Then the closed monodromies $\widehat{f}_{n}$ are conjugate with each other.
It follows from  two facts:
\begin{enumerate}
\item $0$-surgeries on $K_{n}$ are the same $3$-manifold
(which is the surface bundle over $S^{1}$ with monodromy $\widehat{f}_{n}$ and whose first Betti number is one). 
\item The monodromy of any surface bundle over $S^{1}$ with first Betti number one is
 unique up to conjugation. 
\end{enumerate}
\par
Hence, the closed monodromies $\widehat{f}_{n}$ do not distinguish the knots $K_{n}$. 
On the other hand, we see that the monodromies $f_{n}$ distinguish the knots $K_{n}$ by Remark~\ref{rem:plane-field} below. 
\begin{rem}\label{rem:plane-field}
Let $\xi_{n}$ be the contact structure on $S^{3}$ supported by the open book decomposition $(F, f_{n})$.   
Oba told us that 
\[
d_{3}(\xi_{n})=-n^2-n+\frac{3}{2}, 
\]
where $d_{3}$ is the invariant of plane fields given by Gompf \cite{Gompf}. 
In order to compute $d_{3}(\xi_{n})$,
he used the formula for $d_{3}$ introduced in \cite{GHS, EO},
see  \cite{ATagami2} for the details.
By this computation, if $K_{n}$ and $K_{m}$ are the same fibered knots, 
then  $n=m$ or $n+m=-1$. 
Moreover if $n+m=-1$,  we can check that  $K_{n}$ and $K_{m}$ are the same fibered knots.
As a result, we see that $K_{n}$ and $K_{m}$ are the same fibered knots if and only 
if  $n=m$ or $n+m=-1$.
In particular, knots $K_{n}$ $(n\ge 0)$ are mutually distinct.
\par
For a knot $K$ with an annulus presentation $(A,b)$, in general, it is hard to distinguish $A^{n}(K)$ and $A^{m}(K)$. 
Indeed, they have the same Alexander modules. 
Osoinach \cite{Osoinach} and Teragaito \cite{Teragaito} used the hyperbolic structure of the complement of $A^{n}(K)$ to solve the problem (more precisely, they considered the hyperbolic volume of $A^n(K_n)$). 
On the other hand, in Oba's method, we consider contact structures. 
\end{rem}
%
%
%
%
\begin{rem}
In the proof of Theorem~\ref{thm:main}, we proved that $K_{0}\# \overline{K_{1}}$ is not ribbon. 
By the same discussion, if $K_{n}\neq K_{m}$, we also see that $K_{n}\# \overline{K_{m}}$ is not ribbon and it 
is a counterexample for either Conjecture~\ref{conj:AK2} or the slice-ribbon conjecture.
In particular, by Remark~\ref{rem:plane-field}, we obtain infinitely many fibered potential counterexamples to 
the slice-ribbon conjecture by utilizing annulus twists. 
\end{rem}

\noindent {\bf Acknowledgements.} 
The authors would like to thank Lee Rudolph for his comments on tight fibered knots. 
They deeply thank Takahiro Oba for telling them about the result in Remark~\ref{rem:plane-field}. 
They also thank Hidetoshi Masai for pointing out the gap of a proof of Theorem~\ref{thm:main} in the first draft
based on the work of Bonahon \cite{Bonahon}.
They also thank the referee for a careful reading and helpful comments.
The first author was supported by JSPS KAKENHI Grant number 13J05998.  
The second author was supported by JSPS KAKENHI Grant number 15J01087. 

\end{document}